\newtheorem{theorem}{Theorem}[section]
\newtheorem{proposition}[theorem]{Proposition}
\newtheorem{lemma}[theorem]{Lemma}
\newtheorem{corollary}[theorem]{Corollary}
\theoremstyle{definition}
\newtheorem{definition}[theorem]{Definition}
\theoremstyle{remark}
\numberwithin{equation}{section}
\numberwithin{equation}{section}
\newcommand{\be}{\begin{equation}}
\newcommand{\ee}{\end{equation}}
\newcommand{\bbC}{{\mathbb C}}
\newcommand{\bbR}{{\mathbb R}}
\newcommand{\bbN}{{\mathbb N}}
\newcommand{\calD}{{\mathcal D}}
\newcommand{\calL}{{\mathcal L}}
\newcommand{\calH}{{\mathcal H}}
\newcommand{\calR}{{\mathcal R}}
\newcommand{\calS}{{\mathcal S}}
\newcommand{\calO}{{\mathcal O}}
\newcommand{\inner}[2]{\langle#1,#2\rangle}
\newcommand{\norm}[1]{\lVert#1\rVert}
\newcommand{\PB}[2]{\{  #1\,,\,#2 \}}
\newcommand{\wt}{\widetilde}
\begin{document}

\title[pseudospectra of Schr\"odinger operators on Zoll manifolds]{On the pseudospectra of Schr\"odinger operators on Zoll manifolds}



\author{D. Sher}
\address{Mathematics Department\\ DePaul University\\Chicago, IL 60604}
\curraddr{}
\email{dsher@depaul.edu}
\thanks{D. Sher supported in part by NSF grant EMSW21-RTG 1045119.}

\author{A. Uribe}
\address{Mathematics Department\\
		University of Michigan\\Ann Arbor, Michigan 48109}
\curraddr{}
\email{uribe@umich.edu}
\thanks{}

\author{C. Villegas-Blas}
\address{Instituto de Matem\'aticas, UNAM, Unidad Cuernavaca}
\curraddr{}
\email{villegas@matcuer.unam.mx}
\thanks{C. Villegas-Blas partially  supported by projects PAPIIT-UNAM   IN105718
	and CONACYT 000000000283531}

\subjclass[2010]{47A10 (35J10 35S05)}

\date{}

\begin{abstract}
	We consider non-self-adjoint
	Schr\"odinger operators $\Delta +V$ where 
	$\Delta$ is the Laplace-Beltrami operator on a Zoll manifold $X$
	and $V\in C^\infty(X,\bbC)$.  We obtain asymptotic results on the pseudo-spectrum
	and numerical range of such operators.
\end{abstract}

\maketitle




\section{Introduction and statement of results}
Let $\Delta$ denote the Laplace-Beltrami operator on a manifold $X$ and let $V\in C^{\infty}(X,\mathbb C)$.  We will study asymptotic properties of the pseudospectrum and numerical range of the Schr\"odinger operator $\Delta +M_V$, where $M_V$ is the operator of multiplication by $V$. Here we are taking $\Delta$ as a self-adjoint operator with a suitable domain in $L^2(X)$.

Recall that, given an operator $P$ densely defined on some Hilbert space $\calH$, and
$\epsilon>0$, the $\epsilon$-pseudospectrum of $P$ is the set of complex numbers $\lambda$
such that
\[
\norm{(P-\lambda I)^{-1}}\geq\frac{1}{\epsilon},
\]
where, by definition, the norm on the left-hand side is infinite if $\lambda$ is an eigenvalue of $P$.
Equivalently, $\lambda$ is in the $\epsilon$-pseudospectrum of $P$ iff there exists $\psi\in\calH\setminus\{0\}$
such that
\[
\frac{\norm{(P-\lambda I)\psi}}{\norm{\psi}}\leq \epsilon.
\]
This motivates the following definition:

\begin{definition} We say that a sequence of complex numbers $\{\lambda_k\}$ is in the \emph{asymptotic pseudospectrum} of $\Delta+M_V$ if there exists a sequence $\{\psi_k\}$ of functions in $ L^2(X)$ such that as $k\to\infty$,
	\begin{equation}\label{lapapa}
	\frac{\norm{(\Delta+M_V-\lambda_k I)\psi_k}}{\norm{\psi_k}}= O(k^{-\infty})\quad\text{and}\quad
	\lim_{k\to\infty}|\lambda_k| = +\infty.
	\end{equation}
	The sequence $\{\psi_k\}$ is referred to as an associated \emph{pseudo mode}.
\end{definition}
Note that (\ref{lapapa}) 
implies that, $\forall \epsilon >0$, $\lambda_k$ will be in the $\epsilon$-pseudospectrum
of $\Delta +M_V$ for all sufficiently large $k$.
The previous definition has an analogue in the semi-classical setting, see for example \cite{Zw} \S 12.5 (and references therein).
In the present case
the role of Planck's constant is played by $\hbar = 1/k$.

It is well-known that the spectrum of a non-self-adjoint operator is very unstable. 
 The pseudospectrum, however, is much more stable, which makes it a natural object to study. It is also known that points in
the pseudospectrum do not have to be close to the spectrum, even in the semi-classical limit.

\medskip

In this paper we consider the case where $X$ is a closed, connected Zoll manifold.  More specifically, 
we make the assumption there is a $T>0$ such that each geodesic on $X$ has minimal period $T$ (we rule out the existence of exceptional short geodesics).
Such manifolds are often called $C_l$-manifolds \cite{bes78}.  The simplest examples are
the spheres with the round metric.   It is known that
the spectrum of the Laplacian $\Delta$ on a Zoll manifold consists of clusters 
of eigenvalues of uniformly bounded width centered at the points 
\[
\Lambda_k=\frac{4\pi^2}{T^2}\left(k+\frac{\beta}{4}\right)^2,\quad k=1,2,\ldots ,
\]
where $\beta$ is the Morse index of one periodic geodesic 
(hence all periodic geodesics).  Specifically
\begin{equation}\label{cumulos2}
\exists \; C>0\ \text{such that}\qquad \sigma(\Delta) \subset\bigcup_{k\in\bbN}
[\Lambda_k -C\,,\, \Lambda_k+C],
\end{equation}
where $\sigma(\Delta)$ denotes the spectrum of $\Delta$.  
Note that the distance between the 
clusters $\Lambda_k$ and $\Lambda_{k+1}$ is linear in $k$, which allows us to separate the eigenvalues 
of the Laplacian unambiguously into clusters, at least for large $k$.


More precisely, as proved 
in \cite{cdv79}, there exist self-adjoint, commuting pseudo-differential operators $A$ of order one and 
$Q_0$ of order zero such that the spectrum of $A$ consists of the eigenvalues $k=1,2,\ldots$ and
\begin{equation}\label{structureZoll2}
\Delta =\frac{4\pi^2}{T^2}\left(A+\frac{\beta}{4}\right)^2+ Q_0. 
\end{equation}
In particular, we can take  $C$ equals to the operator norm  $\|Q_0\|$ with $Q_0\in\mathcal{B}(L^2(X))$.  
Moreover, if for each $k$ we let
\begin{equation}\label{}
E_k := \text{eigenspace of } A\ \text{corresponding to the eigenvalue } k,
\end{equation}
the eigenvalues in the $k$-th cluster consist of the eigenvalues
of the restriction $Q_0|_{E_k}$, shifted by $\Lambda_k$.

The operator $(A+\frac{\beta}{4})^2$ has spectrum contained in $(\mathbb N+\frac{\beta}{4})^2$, 
and has the same principal and sub principal symbols as $\Delta$ up to the factor $\frac{4\pi^2}{T^2}$. 
For example, in case $X=S^n$, the unit $n$-dimensional sphere, $T=2\pi$ and the eigenvalues
of the Laplacian are $k(k+n-1) = (k+\frac{n-1}{2})^2 - \frac{(n-1)^2}{4}$.  Therefore in this case
\[
\beta = 2(n-1)\quad\text{and}\quad Q_0= - \frac{(n-1)^2}{4}I.
\] 
For the round sphere the principal symbol of $Q_0$ is constant.
In the general Zoll case  the symbol of $ Q_0$ is not constant
(see \cite{zel97}, Theorem 3, for an expression for it), but it is always constant along
geodesics since $[Q_0, \Delta] =0$.  The fact  that the symbol if $Q_0$ is not constant corresponds to 
the non-trivial clustering phenomenon of the Laplace eigenvalues
seen in Zoll metrics, as the asymptotic behavior of the eigenvalue clusters  
is given to first order by the principal symbol of $ Q_0$ via a Szeg\"o limit theorem.

In conclusion, our Schr\"odinger operator $\Delta + M_V$ has the form
\begin{equation}\label{conclusionSchrodinger}
\Delta+M_V=\frac{4\pi^2}{T^2}\left(A+\frac{\beta}{4}\right)^2+ Q_0+M_V.
\end{equation}
Thus we can think of $M_V$ as a non-self-adjoint perturbation added to 
$Q_0$.

\subsection{Statements of our results}

We can now state the main results of this paper.

\subsubsection{On the spectrum}
First we will show that 
the spectrum of $\Delta+M_V$ also has a cluster structure (at least if we are sufficiently far from the origin) generalizing (\ref{cumulos2}) (see also \cite{GK69}). Let us denote by $D$  the closed disk around the origin with radius $||Q_0 + M_V||$. 

\begin{theorem}\label{Cumulos}
	Let $V: X\to\bbC$ be a bounded measurable function.  Then the spectrum of $H:= \Delta +M_V$
	is contained in the union
	\begin{equation}\label{cumulos}
	\bigcup_{k=0}^\infty D_k,\quad D_k := \Lambda_k+ D.
	\end{equation}
	  The spectrum of $H$ consists 
	  entirely of isolated eigenvalues with finite algebraic multiplicities and, for $k$ sufficiently large, 
the spectrum of  $H$ in the the disk $D_k$ consists of eigenvalues with total algebraic multiplicity equal to the 
dimension of $E_k$.
Moreover, the generalized eigenspaces of $\Delta+M_V$  span $L^2(X)$.
\end{theorem}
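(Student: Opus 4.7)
The plan is to exploit the decomposition $H=H_0+B$ from (\ref{conclusionSchrodinger}), where $H_0:=\frac{4\pi^2}{T^2}\bigl(A+\frac{\beta}{4}\bigr)^2$ is self-adjoint and positive with pure point spectrum $\sigma(H_0)=\{\Lambda_k\}_{k\in\bbN}$ and eigenspaces $E_k$, while $B:=Q_0+M_V$ is bounded on $L^2(X)$ with $\norm{B}=\norm{Q_0+M_V}$. The spectral inclusion then follows from a Neumann series: if $\lambda\notin\bigcup_k D_k$, then $\mathrm{dist}(\lambda,\sigma(H_0))>\norm{B}$, so $\norm{(H_0-\lambda)^{-1}}<\norm{B}^{-1}$ by the spectral theorem for $H_0$, and $H-\lambda=(H_0-\lambda)\bigl(I+(H_0-\lambda)^{-1}B\bigr)$ is invertible. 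Since $H_0$ has compact resolvent, the identity $(H-\zeta)^{-1}=\bigl(I+(H_0-\zeta)^{-1}B\bigr)^{-1}(H_0-\zeta)^{-1}$ for $\zeta$ in the common resolvent set shows that $H$ also has compact resolvent, so by the analytic Fredholm theorem $\sigma(H)$ consists of isolated eigenvalues of finite algebraic multiplicity.

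For the multiplicity statement I would use a homotopy of Riesz projections. Since $\Lambda_{k+1}-\Lambda_k$ grows linearly in $k$, for $k$ sufficiently large $D_k$ is disjoint from every other $D_j$; fix a small $\delta>0$ and let $\gamma_k$ be the positively oriented circle $\{|z-\Lambda_k|=\norm{B}+\delta\}$. For every $t\in[0,1]$ the same Neumann argument yields $\sigma(H_0+tB)\subseteq\bigcup_j(\Lambda_j+tD)\subseteq\bigcup_j D_j$, so $\gamma_k$ misses $\sigma(H_0+tB)$. Hence the Riesz projection
\[
P_k(t):=-\frac{1}{2\pi i}\oint_{\gamma_k}(H_0+tB-z)^{-1}\,dz
\]
depends continuously on $t$ in operator norm; its rank is integer-valued and hence constant in $t$, and $P_k(0)$ is the orthogonal projection onto $E_k$. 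Therefore the spectral projection $P_k(1)$ of $H$ onto its eigenvalues in $D_k$ has rank $\dim E_k$, giving the claimed total algebraic multiplicity.

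The main obstacle is the completeness of the generalized eigenspaces, which does not follow from elementary perturbation theory. The plan is to invoke Keldysh's theorem on completeness of root vectors (see, e.g., Gohberg--Krein, \emph{Introduction to the Theory of Linear Non-Selfadjoint Operators}, Ch.~V). Choose a real $\zeta$ below $\sigma(H_0)$, so that $(H_0-\zeta)^{-1}$ is a positive, self-adjoint, compact operator; setting $n:=\dim X$, Weyl's law on $X$ gives $\dim E_k\sim c\,k^{n-1}$, whence $\sum_k(\dim E_k)\Lambda_k^{-p}<\infty$ for $p>n/2$, so $(H_0-\zeta)^{-1}$ lies in a Schatten class. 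Writing $(H-\zeta)^{-1}=(H_0-\zeta)^{-1}\bigl(I+B(H_0-\zeta)^{-1}\bigr)^{-1}$, where the second factor differs from the identity by a compact operator, Keldysh's theorem then yields completeness of the root vectors of $(H-\zeta)^{-1}$, and therefore of $H$, in $L^2(X)$.
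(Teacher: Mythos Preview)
Your proof is correct and follows essentially the same strategy as the paper's: a Neumann-series argument against the self-adjoint part $H_0=\frac{4\pi^2}{T^2}(A+\beta/4)^2$ for the spectral inclusion and compact resolvent, Riesz projections for the multiplicity count, and the Gohberg--Kre\u{\i}n/Keldysh completeness theorem for the root-vector span via the Schatten-class property of $(H_0-\zeta)^{-1}$.

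The only notable stylistic difference is in the multiplicity step. The paper compares the Riesz projection $P_k$ of $H$ directly with that of $\Delta$ (not $H_0$), using the resolvent identity on a circle $\mathcal C_k$ of radius $\sim\Lambda_k-\Lambda_{k-1}$ to obtain $\|P_k-\Pi_k\|=O(k^{-1})<1$, and then invokes the standard fact that two projections at distance $<1$ have equal rank. Your homotopy $t\mapsto H_0+tB$ achieves the same conclusion by continuity of the integer-valued rank; both arguments are standard and equivalent in strength. For the completeness part the paper cites Theorem~10.1 of Gohberg--Kre\u{\i}n (applied with $\Delta$ rather than $H_0$, using the Colin de Verdi\`ere polynomial growth of $\dim E_k$), which is the same Keldysh-type machinery you invoke.
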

Note that the sets ${D}_k$ are pairwise disjoint for large $k$.

\subsubsection{Results on the asymptotic pseudospectrum}
We now turn to the
statement of our results on the asymptotic pseudospectrum of $\Delta+M_V$.

Let $H_0(x,\xi) = \norm{\xi}$, $H_0\in C^\infty(T^*X\setminus\{0\})$.
This Hamiltonian generates geodesic flow re-parametrized by arc
length.  In what follows we shall think of oriented geodesics as trajectories of the
Hamiltonian $H_0$ on the unit tangent bundle of $X$.
Let $\calO$ be the manifold of oriented geodesics on $X$.  This is the
symplectic manifold obtained by symplectic
reduction of $T^*X\setminus\{0\}$ under the circle action generated by $H_0$.  
Given any smooth function $V$ on $X$, one can define its Radon transform to be the function $\wt V$ on $\calO$ defined by
\[
\wt V(\gamma) = \frac{1}{T}\int_\gamma V\, ds,
\]
where $s$ denotes arc length along the oriented geodesic $\gamma\in\calO$, 
and we are
abusing the notation and continue to denote by $V$ the pull-back of the potential
to the unit tangent bundle.  
Note that $\calO$ has 
an involution corresponding to reversing the orientation of the geodesics.  This involution is
anti-symplectic, and $\wt V$ is even with respect to it.

Now recall that $Q_0$ commutes with $\Delta$ (and with $(A+\frac{\beta}{4})$). 
Therefore its principal symbol $q_0$ is invariant along geodesics and may thus be viewed as a 
function on $\calO$, also denoted $q_0$.  We reviewed above that, in case $X$ is the unit sphere, 
$q_0=-\frac{(n-1)^2}{4}$ (constant).
With this notation, our  result is as follows:

\begin{theorem}\label{Main}
	(1) Let $\mu = (\wt{V}+q_0)(\gamma)$, where $\gamma\in\calO$ is such that
	\[
	\PB{\Re\wt{V}+q_0}{\Im\wt{V}}(\gamma) < 0.
	\]
	Then $\{\Lambda_k+\mu\}$ is in the asymptotic pseudospectrum of $\Delta+M_V$.  In fact, for each $m\in\gamma$,
	there exists a pseudo mode $\{\psi_k\}$ such that the sum
	$\sum_{k=1}^\infty \psi_k$ is a distribution whose wave-front set is the ray in $T^*X\setminus\{0\}$
	through $m$.
	
	(2) If $\{\Lambda_k+\mu\}$  is in the asymptotic pseudospectrum with $\mu = (\wt{V}+q_0)(\gamma)$, and
	\[
	\PB{\Re\wt{V}+q_0}{\Im\wt{V}}(\gamma) >0,
	\] then for any associated pseudo mode $\{\psi_k\}$,
	the wave-front set of $\sum_k \psi_k$ is disjoint from the conic set in $T^*X\setminus\{0\}$
	generated by $\gamma$.
\end{theorem}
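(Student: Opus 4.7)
My plan is first to reduce, by Weinstein-type averaging along the $S^1$-flow generated by $A$, to a perturbation commuting with $\Delta$. Set
$$\bar M_V := \frac{1}{T}\int_0^T e^{2\pi itA}\,M_V\,e^{-2\pi itA}\,dt;$$
this zeroth-order operator commutes with $A$, hence preserves every $E_k$, and has principal symbol the Radon transform $\wt V$ (viewed as a geodesic-invariant function on the unit sphere bundle). Iterating the cohomological step that solves $\{|\xi|/T,b\}=V-\wt V$ along the geodesic flow, one obtains a unitary pseudodifferential $U$, a small perturbation of the identity, such that $U^{-1}(\Delta+M_V)U=\Delta+\bar M_V+R$ with $R$ of order $-\infty$. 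Because the spacing between clusters is linear in $k$, any pseudo mode for $\Delta+M_V$ at $\Lambda_k+\mu$ must lie in $E_k$ modulo $O(k^{-\infty})$. The problem thus reduces to constructing (or obstructing) quasi-modes at the value $\mu$ for the Toeplitz-type family $(Q_0+\bar M_V)|_{E_k}$, whose principal symbol on $\calO$ is $q_0+\wt V$.

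\textbf{Part (1): WKB construction.} Fix $m\in\gamma$. I would construct $\psi_k\in E_k$ via the Davies--Dencker--Sj\"ostrand--Zworski complex-phase WKB recipe, transplanted to the Zoll/Toeplitz setting. Take $\psi_k(x)\sim e^{ik\phi(x)}\,a(x;1/k)$ in a neighborhood of $\pi(m)$, with $d\phi(\pi(m))$ equal to the covector part of $m$ and $\Im\phi$ having positive-definite Hessian transverse to the projected geodesic. The eikonal equation for $\phi$ associated to the symbol $q_0+\wt V-\mu$ on the characteristic variety can be solved to infinite order near $m$ precisely because
$$\PB{\Re\wt V+q_0}{\Im\wt V}(\gamma)<0$$
is the Hörmander sign condition guaranteeing an admissible complex phase. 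Amplitudes are then solved from transport equations along the Hamiltonian flow, and projection onto $E_k$ followed by the averaging conjugation $U$ yields
$$\norm{(\Delta+M_V-\Lambda_k-\mu)\psi_k}=O(k^{-\infty})\,\norm{\psi_k}.$$
For the wave-front claim on $\sum_k\psi_k$, the individual phases are fixed so that destructive interference between different $k$'s localizes the sum to the single point $m$; stationary phase in $k$ pins the frequency to the direction of $m$, producing exactly the ray through $m$.

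\textbf{Part (2): Positive commutator.} Suppose $\{\Lambda_k+\mu\}$ admits a pseudo mode $\{\psi_k\}$ and that $\PB{\Re\wt V+q_0}{\Im\wt V}(\gamma)>0$. By the reduction above we may assume $\psi_k\in E_k$ and $(Q_0+\bar M_V-\mu)\psi_k=O(k^{-\infty})\norm{\psi_k}$. Suppose for contradiction that a point $(m,\eta)$ in the cone over $\gamma$ lies in $\wf(\sum_k\psi_k)$. Choose a self-adjoint zeroth-order pseudodifferential operator $B$ with principal symbol $b\ge 0$ supported in a small conic neighborhood of $(m,\eta)$, invariant under the geodesic flow (so descending to $\calO$), and strictly positive at $\gamma$. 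From
$$2\,\Im\,\inner{(Q_0+\bar M_V-\mu)\psi_k}{B\psi_k}=\inner{i[B,\,Q_0+\bar M_V]\psi_k}{\psi_k}+O(k^{-\infty})\norm{\psi_k}^2,$$
the principal symbol of the commutator on $\calO$ is $\{b,q_0+\Re\wt V\}+i\{b,\Im\wt V\}$, so taking imaginary parts leaves $\{\Im\wt V,b\}$. Tailoring $b$ so that its Hamiltonian flow is aligned with that of $-(\Re\wt V+q_0)$ at $\gamma$, the positive-bracket hypothesis turns this into a strictly positive symbol there; sharp Gårding then forces $\norm{B\psi_k}=O(k^{-\infty})\norm{\psi_k}$, contradicting $(m,\eta)\in\wf(\sum_k\psi_k)$.

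\textbf{Main obstacle.} The sharpest technical issue is the construction in Part (1) of a complex phase $\phi$ compatible with the $S^1$-action underlying the cluster structure, so that $\psi_k$ lands in $E_k$ modulo $O(k^{-\infty})$ rather than merely being microlocalized near it; this is what makes precise the correspondence between the Hörmander bracket on $\calO$ and the original bracket that drives the WKB solvability on $T^*X$. Tracking wave-front sets through the FIO $U$ used in the averaging reduction, and in Part (2) simultaneously choosing $b$ to be geodesic-invariant while being placed to exploit the positive bracket via sharp Gårding, are the corresponding subtleties on the commutator side.
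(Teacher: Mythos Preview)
Your overall strategy---averaging to a perturbation commuting with $A$, then building/obstructing quasi-modes for the reduced operator---matches the paper's. One small correction: since $V$ is complex-valued the conjugator $e^F$ (your $U$) is an invertible pseudodifferential operator but \emph{not} unitary; the paper accordingly has to check separately that $\norm{e^{-F}\psi_k}$ stays bounded away from zero. This is harmless but worth noting.

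\medskip
\textbf{Part (1).} Here your route genuinely diverges from the paper's. You propose a Davies--Dencker--Sj\"ostrand--Zworski complex-phase WKB ansatz $e^{ik\phi}a$ and then project onto $E_k$; you correctly flag the compatibility of the phase with the $S^1$-structure as the hard point. The paper avoids this issue entirely by working instead with \emph{Hermite distributions} in the sense of Boutet de Monvel--Guillemin: it builds a single distribution $\psi\in J^\ell(X,\calR)$, where $\calR$ is the ray through $m$, such that $B\psi\in C^\infty(X)$. The negative-bracket condition enters as the surjectivity (with nontrivial kernel) of a model operator $\calL$ on the symplectic normal space, exactly as in the Berezin--Toeplitz paper of Borthwick--Uribe. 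Because $[A,B]=0$, the cluster decomposition $\psi=\sum_k\psi_k$ automatically has $\psi_k\in E_k$ and $\norm{B\psi_k}=O(k^{-\infty})$, so the projection problem you worry about never arises; and $\wf(\psi)=\calR$ is immediate from membership in $J^\ell(X,\calR)$, with no need for any stationary-phase-in-$k$ argument. Your WKB approach can presumably be pushed through, but the Hermite-distribution route is both shorter and better adapted to the cluster structure.

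\medskip
\textbf{Part (2).} The paper is very brief here: having observed that $(M_V^{\text{ave}}+Q_0-\mu)\psi\in C^\infty$ for $\psi=\sum_k\psi_k$, it simply cites H\"ormander's microlocal subellipticity theorem (Theorem 27.1.11 in Volume~IV), which under the positive-bracket hypothesis forces $\wf(\psi)$ to miss the cone over $\gamma$. Your positive-commutator/sharp-G\aa rding argument is essentially a rederivation of that subelliptic estimate; it can be made to work, but note that your displayed identity is not quite right (when $P=Q_0+\bar M_V-\mu$ is non-self-adjoint, $2\Im\inner{P\psi}{B\psi}$ involves $P^*$ as well and produces an extra anticommutator term with $\Im P$), and choosing $b$ simultaneously geodesic-invariant and ``aligned'' with the flow of $-(\Re\wt V+q_0)$ needs more care than you indicate. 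Invoking subellipticity directly is cleaner.
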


Note that although the potential $\wt V$ is even under the involution 
of reversing the orientation of $\gamma$, the Poisson bracket 
$\PB{\Re\wt{V}+q_0}{\Im\wt{V}}(\gamma)$ is odd under the same involution.
Together with part (1) of the theorem, this implies that the set of sequences of the form
\[
\left\{\lambda_k+\wt{V}(\gamma)\;;\; \PB{\Re\wt{V}+q_0}{\Im\wt{V}}(\gamma)\not=0\right\}
\]
is contained in the asymptotic pseudospectrum.
However, in general this set does not equal the asymptotic pseudospectrum.  For example, in the case of spheres,
if $V$ is odd (with respect to the antipodal map) then $\wt V$ is identically zero.
In that case, however, there is a substitute function for $\wt V$, namely the function
\begin{equation}\label{}
\wt{\wt{V}}=\frac 14 \wt{V^2}- \frac{1}{8\pi} \int_0^{2\pi}\, dt\int_0^t\PB{\phi_t^*V}{\phi_s^*V}\,ds
\end{equation}
where $\phi_t$ is geodesic flow, see \cite{AU85} (Theorem 3.2).  More subtly we expect
that one can have that $ \PB{\Re\wt{V}+q_0}{\Im\wt{V}}(\gamma)=0$
and have pseudo modes concentrated at $\gamma$ with pseudo eigenvalue $\wt V(\gamma)$,
in the spirit of Theorem 2.1 in \cite{P-S04}, where non-vanishing of higher-order derivatives
is assumed.

Theorem 1.3 is analogous to Theorem 12.8 in \cite{Zw} (originally due to Davies), where a similar Poisson
bracket condition appears.  It is also analogous in spirit to the main result in \cite{BU03}.

\subsubsection{Results on the numerical range}
Recall that the numerical range of an operator $\hat{H}$ on $L^2(X)$ with domain $\calD(\hat{H})$ is
the subset of the complex plane defined by:
\[
\calR(\hat{H}) = \left\{\frac{\inner{\hat{H}(\psi)}{\psi}}{\inner{\psi}{\psi}}\;;\;\psi\in\calD(\hat{H})\right\}.
\]
We are interested in the numerical range of $\hat{H} = \Delta + M_V$.  We will only consider
the asymptotics of the sets  of values
\[
\left\{\frac{\inner{\hat{H}(\psi)}{\psi}}{\inner{\psi}{\psi}}\;;\;\psi\in E_k\right\}.
\]
To state our result we shift these sets by $\Lambda_k$:
\begin{theorem}\label{NumRangeThm} In case $X=S^n$ or $X$ a Zoll surface, 
	the limit $\calR_{\infty}$ of the sets
	\[
	\calR_k = \left\{ \frac{\inner{(\Delta + M_V)\psi}{\psi}}{\inner{\psi}{\psi}}-\Lambda_k
	\;;\;\psi\in E_k\setminus\{0\}
	\right\}
	\]
	as $k$ tends to infinity is the convex hull of the image of $\wt{V}+q_0:\calO\to\bbC$.
\end{theorem}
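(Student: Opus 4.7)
The plan is to reduce the problem to an asymptotic statement about top eigenvalues of self-adjoint compressions, which is then handled by a Szeg\"o/cluster-type theorem on Zoll manifolds. First, using (\ref{conclusionSchrodinger}) and $A\psi = k\psi$ for $\psi \in E_k$, one computes $(\Delta + M_V)\psi = \Lambda_k \psi + (Q_0 + M_V)\psi$, so that
\[
\calR_k = \mathrm{num}(B_k), \qquad B_k := \Pi_k(Q_0 + M_V)|_{E_k},
\]
where $\Pi_k$ is the orthogonal projection onto $E_k$. Each $\calR_k$ is a compact convex subset of $\bbC$ (Hausdorff--Toeplitz), and the family is uniformly bounded since $Q_0 + M_V$ is bounded.

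Because a compact convex set in $\bbC$ is determined by its support function, I would prove Hausdorff convergence of $\calR_k$ by showing that, for every $\theta \in [0, 2\pi)$,
\[
h_k(\theta) := \sup\{\Re(e^{-i\theta} z) : z \in \calR_k\} \;\longrightarrow\; H(\theta),
\]
where $H$ is the support function of the convex hull of the image of $\wt V + q_0$ on $\calO$. Using $Q_0^*=Q_0$ and $M_V^*=M_{\bar V}$, one has $\Re(e^{-i\theta}(Q_0 + M_V)) = \cos\theta\, Q_0 + M_{\Re(e^{-i\theta} V)}$, so $h_k(\theta)$ equals exactly the top eigenvalue of the self-adjoint compression
\[
S_k(\theta) := \Pi_k\bigl[\cos\theta\, Q_0 + M_{\Re(e^{-i\theta} V)}\bigr]\Pi_k
\]
acting on $E_k$.

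The heart of the proof is a cluster/Szeg\"o theorem for Zoll manifolds (in the spirit of Weinstein, Colin de Verdi\`ere, and Guillemin--Uribe): for a zeroth-order self-adjoint operator whose principal symbol descends to $\calO$, plus a bounded multiplication operator $M_f$, the top eigenvalue of the compression to $E_k$ converges to the maximum on $\calO$ of the sum of the reduced symbol and the Radon transform $\wt f$. Applied to $S_k(\theta)$, whose effective symbol on $\calO$ is
\[
\cos\theta\, q_0 + \Re(e^{-i\theta}\wt V) = \Re\bigl[e^{-i\theta}(\wt V + q_0)\bigr],
\]
this yields $\lim_{k\to\infty} h_k(\theta) = \max_{\gamma\in\calO}\Re[e^{-i\theta}(\wt V + q_0)(\gamma)] = H(\theta)$, and the Hausdorff convergence $\calR_k \to \mathrm{conv}(\mathrm{image}(\wt V + q_0))$ follows from pointwise convergence of support functions on a uniformly bounded family of convex sets.

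The main obstacle is the cluster/Szeg\"o step, which splits into two directions. The lower bound $h_k(\theta)\geq H(\theta)-o(1)$ requires producing, for each $\gamma\in\calO$, a family of quasi-modes in $E_k$ microlocally concentrated on the ray generated by $\gamma$ and nearly diagonalizing $S_k(\theta)$ with eigenvalue close to $\Re[e^{-i\theta}(\wt V+q_0)(\gamma)]$. The matching upper bound requires effective microlocal commutation of $Q_0+M_V$ with geodesic averaging on $E_k$ to replace $S_k(\theta)$ by a Toeplitz-like operator whose symbol on $\calO$ is the averaged one. The restriction to $X=S^n$ or a Zoll surface enters precisely here: the required localized quasi-mode constructions and the surjectivity of the symbol-to-eigenvalue correspondence are cleanly available in those cases, via the representation theory of $SO(n+1)$ in the spherical case and via the two-dimensional nature of $\calO$ (together with the Weinstein--Guillemin circle-bundle picture over a surface) in the Zoll-surface case.
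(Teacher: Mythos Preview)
Your strategy coincides with the paper's: reduce $\calR_k$ to the numerical range of $\Pi_k(Q_0+M_V)\Pi_k$, use Hausdorff--Toeplitz convexity, and prove the two inclusions separately. Your support-function reformulation (top eigenvalue of $S_k(\theta)$) is exactly the paper's half-plane argument in dual language, and your lower-bound step via geodesic-concentrated quasi-modes is the paper's Proposition (highest-weight harmonics $(x_1+ix_2)^k$ on $S^n$, transported to Zoll surfaces by Weinstein's intertwining FIO).

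The one substantive difference is the upper bound. You appeal to a ``cluster/Szeg\H{o} theorem'' and ``effective microlocal commutation,'' which is heavier and somewhat imprecise; the paper instead uses only the sharp G\aa rding inequality. Given a half-plane $\{au+bv\geq c\}$ containing the range of $\wt V+q_0$, the self-adjoint zeroth-order operator $Q=a(Q_0+M_{\Re V}^{\mathrm{ave}})+bM_{\Im V}^{\mathrm{ave}}-cI$ has nonnegative principal symbol, so G\aa rding gives $\langle Q\psi_k,\psi_k\rangle\geq -\langle B\psi_k,\psi_k\rangle$ with $B$ of order $-1$; since $BA$ is bounded and $A\psi_k=k\psi_k$, one gets $\langle Q\psi_k,\psi_k\rangle\geq -C/k$, and the limit inequality follows. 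This is shorter and also closes a gap in your sketch: a weak-$*$ Szeg\H{o} limit for the eigenvalue distribution of $S_k(\theta)$ does \emph{not} by itself force the top eigenvalue to converge to the supremum of the symbol---you would still need a G\aa rding-type bound to control that endpoint, so you may as well use G\aa rding directly.
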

For a similar result for Berezin-Toeplitz operators see \cite{BU03}.

\section{Proof of theorem \ref{Cumulos}}

Let $D$ be the operator $\frac{4\pi^2}{T^2}\left(A+\frac{\beta}{4}\right)^2$.   
Let $z$ be in the resolvent set $\rho(D)$ of the operator $D$, with the distance
$d(z,\sigma(D))$ from $z$ to the spectrum $\sigma(D)$ of $D$  greater than the norm
$|| Q_0 + M_V ||$.  Since $D$ is self adjoint then    $|| \left(D-z\right)^{-1} || = 1/d(z,\sigma(D))$ which implies 
 $|| \left( Q_0 + M_V \right) \left(D-z\right)^{-1}  ||< 1$. Therefore the operator  $\left(I + (Q_0+M_V) \left(D-z\right)^{-1} \right)^{-1}$ is  a bounded   operator which in turn implies that 
 $(\Delta + M_V - z)^{-1}$ exists and it is a bounded operator, i.e.  $z$ is in the resolvent set $\rho(H)$  of $H$.  
Thus the spectrum $\sigma(H)$ of H  must be contained in the union of the closed disks $D_k$ indicated in (\ref{cumulos}).  

Consider $z\in\bbC$ such that $d(z,\sigma(\Delta)) > 2\|M_V\|$.   Since $\Delta$   has compact resolvent then writing 
$H-z =  \left( I + M_V(\Delta-z)^{-1} 	\right) (\Delta-z)$ we see that $(H-z)^{-1}$ exists and it must be compact (note that since $\Delta$ is self adjoint then  $\|(\Delta-z)^{-1}\|=1/d(z,\sigma(\Delta))$).   Thus we have that the spectrum of $H$ consists entirely of eigenvalues with finite algebraic multiplicities, see reference \cite{Ka}, theorem III, section 6.8.

Let $g(k)=\Lambda_k-\Lambda_{k-1}=O(k)$ and let ${\mathcal C}_k$ be the circle
of radius $g(k)/2$ and center $\Lambda_k$.  Note that, for k sufficiently large, both the interval of radius $||Q_0||$ around $\Lambda_k$ and the disk 
$D_k$ are inside ${\mathcal C}_k$.  
   Moreover,   
\[
\forall z\in{\mathcal C}_k,\quad d(z,\sigma(\Delta))\geq g(k)/3 
\textrm{ and } d(z,\sigma(\hat{H}))\geq g(k)/3.
\]
In particular, ${\mathcal C}_k$ is contained in both $\rho(\Delta)$ and $\rho({H})$.
Thus the following two projections are well defined:
\begin{equation}\label{proyectorPk}
P_k=\frac{1}{2\pi\imath}\oint_{{\mathcal C}_k}\left(z-{H}\right)^{-1}dz
\end{equation}
and
\begin{equation}\label{proyectorPik}
\Pi_k=\frac{1}{2\pi\imath}\oint_{{\mathcal C}_k}\left(z-\Delta\right)^{-1}dz.
\end{equation}
Since $\Delta$ is self adjoint then  $\|(z-\Delta)^{-1}\|= 1/d(z,\sigma(\Delta)) = O(k^{-1})$ uniformly in  ${\mathcal C}_k$.  This implies for k sufficiently large that 
$ \|  M_V(z-\Delta)^{-1}\| \leq 1/2$ which in turn implies, using a Neumann series, that
\begin{eqnarray}
\| \left(z-{H}\right)^{-1} \| &\leq& \|  (z-\Delta)^{-1} \| \; \| \left(  I + M_V(z-\Delta)^{-1} \right)^{-1}   \| \\
&\leq& \frac{1}{d(z,\sigma(\Delta))} \; \frac{1}{1- \| M_V   (z-\Delta)^{-1} \|}  = O(k^{-1})
\end{eqnarray}
uniformly in  ${\mathcal C}_k$. 

Thus we obtain
\begin{eqnarray}\label{normadif}
\|P_k-\Pi_k\|&=&\left\|\frac{1}{2\pi\imath}\oint_{{\mathcal C}_k}\left(z-{H}\right)^{-1}M_V\left(z-\Delta\right)^{-1}dz\right\| \nonumber \\
&\leq&\frac{1}{2\pi}\oint_{{\mathcal C}_k}\|\left(z-{H}\right)^{-1}\| \; \|M_V\|  \; \|\left(z-\Delta\right)^{-1}\||dz| \nonumber\\
&\leq&\frac{1}{2\pi}(2\pi g(k))O(k^{-1})  \; ||M_V|| \; O(k^{-1})=O(k^{-1}).
\end{eqnarray}
Therefore, for $k$ sufficiently large, the norm  $\|P_k-\Pi_k\|$
is less than one which in turn implies  that the dimension of
the range of $P_k$,  $  \text{dim}(\text{Ran}(P_k)) $, and the dimension of the range of $\Pi_k$, 
$\text{dim}(\text{Ran}(\Pi_k)) $, must
be the same, see reference  \cite{Ka}, chapter I, section 4.6.   Moreover,  $\text{dim}(\text{Ran}(P_k)) $ must be finite which implies that the spectrum of $H$ inside ${\mathcal C}_k$ consists only of eigenvalues, see reference \cite{Ka}, chapter III, section 6.5.   We conclude that the  total algebraic multiplicity of the eigenvalues of $H$ inside $D_k$ must be the same as the initial geometric multiplicity of the cluster of eigenvalues of $\Delta$ in the interval 
$[\Lambda_k-\|Q_0\|,\Lambda_k+\|Q_0\|]$, namely the dimension of $E_k$. 

Finally, for $1<p<\infty$, let us denote by ${\mathcal S}_p$  the space of all completely continuous operators  $A$ for which $\sum_{j=1}^{\infty}s_j^p(A)<\infty$ where $s_j(A) $ are the eigenvalues of $|A|\equiv\sqrt{A^*A}$.   
The fact that the generalized eigenspaces of $\Delta+M_V$  span $L^2(X)$ is a consequence of theorem 10.1 in reference \cite{GK69} noting that 
$\Delta$ has discrete spectrum and for all $\lambda$  in the resolvent set of $\Delta$,   $(\Delta-\lambda)^{-1}V(\Delta-\lambda)^{-1}$ belongs to a Schatten class ${\mathcal S}_p$ for some  
$1<p<\infty$. This last fact is a consequence of (i) the property that ${\mathcal S}_p$ is an ideal in ${\mathcal B}(L^2(X))$, (ii) $V(\Delta-\lambda)^{-1}$ is a bounded operator and (iii)   $(\Delta-\lambda)^{-1}$ belongs to  ${\mathcal S}_p$ for some $1<p<\infty$  (the multiplicity of the eigenvalues of $\Delta$ in the interval $[\Lambda_k -C\,,\, \Lambda_k+C]$ is given by a polynomial in $k$ for $k$ sufficiently large, see reference \cite  {cdv79}).

\section{Proof of theorem \ref{Main}}

\subsection{The averaging method}
We begin by recalling the ``averaging method" (\cite{wei77, Gu81}):
\begin{proposition}
	If $V:X\to\bbC$ is smooth, there exists a pseudo-differential operator of order $(-1)$, $F$, such that
	\[
	e^F\bigl(\Delta + M_V\bigr)e^{-F}=e^{F}\bigl(\frac{4\pi^2}{T^2}(A+\frac{\beta}4)^2+ Q_0+M_V\bigr)e^{-F} = \frac{4\pi^2}{T^2}(A+\frac{\beta}{4})^2 + B+S,
	\]
	where $B$ is a pseudo-differential operator of order zero such that $[A,B] = 0$, and $S$ is a smoothing operator.
	Moreover, the
	symbol of $B$ is $\wt V+q_0$, regarded as a function on $T^*X^n\setminus\{0\}$.
\end{proposition}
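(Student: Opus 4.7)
The plan is to construct $F$ by the standard Weinstein--Guillemin averaging iteration, and then to upgrade a ``commutes with $A$ modulo smoothing'' statement to an exact commutation via a spectral averaging. First I would expand the conjugation using Hadamard's formula
\[
e^{F}(\Delta + M_V)e^{-F} \;=\; \sum_{n=0}^{\infty}\frac{1}{n!}\,(\mathrm{ad}\,F)^{n}(\Delta + M_V),
\]
where $\mathrm{ad}\,F(P) := [F,P]$. Since $F$ has order $-1$, the bracket $[F,\Delta]$ has order $0$, $[F,M_V]$ has order $-2$, and each subsequent bracket drops order by at least one, so the series makes sense as an asymptotic expansion in the pseudodifferential calculus and all corrections are of order $\le 0$.

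The heart of the construction is the order-by-order solution of a cohomological equation on $T^{*}X\setminus\{0\}$. Asking that the principal symbol (at order $0$) of the conjugated operator equal $\wt{V}+q_{0}$ forces
\[
\tfrac{1}{i}\,\bigl\{f,\|\xi\|^{2}\bigr\} \;=\; \wt{V} - V,
\]
where $f$ is the $(-1)$-homogeneous principal symbol of $F$. The right-hand side has zero mean along every orbit of the geodesic flow (by definition of the Radon transform $\wt V$), and the Zoll hypothesis---that every geodesic has common minimal period $T$---makes this transport equation solvable by explicit integration along the flow, producing a smooth $f$ homogeneous of degree $-1$. I would iterate this step: at the $n$-th stage I would choose a symbol $f_{-n}$ of order $-n$ that kills the residual obstruction at order $-n+1$, again by solving the same cohomological equation with a right-hand side of lower order whose zero geodesic mean is enforced by the previous stages. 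Borel summation of the formal series $F \sim \sum_{n\geq 1} f_{-n}$ then yields a genuine pseudodifferential operator of order $-1$ realising the formal expansion.

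At this point one has
\[
e^{F}(\Delta + M_V)e^{-F} \;=\; \tfrac{4\pi^{2}}{T^{2}}\bigl(A + \tfrac{\beta}{4}\bigr)^{2} + B' + S_{0},
\]
with $B'$ of order $0$, $S_{0}$ smoothing, and $[A,B']$ smoothing. To upgrade the approximate commutation to an exact one, I would spectrally average, setting
\[
B \;:=\; \frac{1}{2\pi}\int_{0}^{2\pi} e^{itA}\,B'\,e^{-itA}\,dt.
\]
Because $\sigma(A)\subset \bbN$, the group $e^{itA}$ is $2\pi$-periodic, so $B$ commutes with $A$ by construction. Differentiating $e^{itA}B'e^{-itA}$ in $t$ yields $i\,e^{itA}[A,B']e^{-itA}$, which is smoothing uniformly in $t$; integrating shows $B-B'$ is smoothing and can be absorbed into $S$. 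Finally, Egorov's theorem identifies the principal symbol of $B$ with the average of that of $B'$ along the Hamilton flow of $\|\xi\|$, i.e.\ the reparametrized geodesic flow, and this average is exactly $\wt{V}+q_{0}$ since $q_{0}$ is already geodesic-invariant.

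The main obstacle is the bookkeeping in the iteration: one must verify at each stage that the new residual symbol has vanishing geodesic mean, so that the cohomological equation remains solvable in the class of homogeneous symbols. The Zoll hypothesis is indispensable here---without a common period the transport solution obtained by integrating along trajectories would grow and fail to define a global symbol of the required homogeneity.
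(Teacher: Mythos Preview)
Your approach is the same Weinstein--Guillemin averaging iteration the paper uses, and the overall architecture (Hadamard expansion, order-by-order cohomological equation along the geodesic flow, Borel summation, then a final cleanup) is correct. One cosmetic difference: the paper performs the averaging at the \emph{operator} level at every stage, setting $B_0=\frac{1}{T}\int_0^T e^{it\wt A}(M_V+Q_0)e^{-it\wt A}\,dt$ and similarly for the lower-order pieces, so that each $B_{-j}$ commutes exactly with $A$ from the start; you instead iterate purely symbolically and fix up the commutation at the end with a single spectral average over $e^{itA}$. Both variants work and yield the same principal symbol $\wt V+q_0$.

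There is, however, one genuine inaccuracy in your description of the iteration. You write that at stage $n$ the right-hand side of the cohomological equation has ``zero geodesic mean enforced by the previous stages.'' That is not how it works: the residual symbol $r_{-n+1}$ produced after stage $n-1$ has no reason to be mean-zero along the flow. What actually happens is that at each stage you \emph{split} $r_{-n+1}=r_{-n+1}^{\mathrm{ave}}+(r_{-n+1}-r_{-n+1}^{\mathrm{ave}})$; the averaged part $r_{-n+1}^{\mathrm{ave}}$ is absorbed as the order-$(-n+1)$ contribution to $B'$, and only the mean-free remainder $r_{-n+1}-r_{-n+1}^{\mathrm{ave}}$ is fed into the transport equation $\{f_{-n},\|\xi\|^2\}=c\,(r_{-n+1}-r_{-n+1}^{\mathrm{ave}})$. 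The solvability is thus guaranteed by this splitting at the current stage, not by anything inherited from earlier ones. If you tried to run the iteration as literally stated---solving $\{f_{-n},\|\xi\|^2\}\propto r_{-n+1}$ without first subtracting the average---the equation would have no homogeneous solution and the scheme would stall after the first step.
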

\begin{proof}  We'll sketch the proof, for completeness. Let 
	\[
	\wt A = \frac{2\pi}{T}(A+\frac{\beta}{4});
	\]
	then $\wt A^2+Q_0=\Delta$, and $\wt A$ takes the value $\sqrt{\Lambda_k}$ on the space $E_k$.
	We proceed inductively, with $F_1$ an operator or order $(-1)$ solving the equation
	\newcommand{\ave}{\text{\tiny ave}}
	\[
	[F_1, \wt A^2] = B_0 - (M_V+ Q_0) + \text{order }(-1),
	\]
	where $B_0$ is the operator
	\[
	B_0 = \frac{1}{T}\int_0^{T} e^{it\wt A}(M_V+ Q_0)e^{-it\wt A}\, dt.
	\]
	It is easy to see that $B_0$ commutes with $\wt A$ and
	hence with $A$.
	
	For $F_1$ we take an operator with principal symbol
	\[
	f_1:= -\frac{H_0^{-1}}{2T}\int_0^{T}dt \int_0^t\, \phi_s^*( V+q_0)\,ds
	\]
	where $H_0(x,\xi) = \norm{\xi}$ and $\phi_s$ is its Hamilton flow. Note that the principal symbol of $\wt A^2$ is the same as the principal symbol of $\Delta$, i.e. $H_0^2$.
	The symbol of $[F_1,\wt A^2]$ is
	\[
	\PB{f_1}{H_0^2} = 2H_0\PB{f_1}{H_0} =
	-\frac{1}{T}\int_0^{T}\bigl(\phi_t^*(V+q_0) - (V+q_0)\bigr)\,dt = V-V^\ave,
	\]
	where $V^\ave+q_0 = \frac{1}{T}\int_0^{T} \phi_t^*(V)\,dt+q_0$ is the principal symbol of $B_0$. Note that the factors of $q_0$ cancel since $\phi_t^*(q_0)=q_0$ (i.e. $q_0$ is already averaged over the flow).
	It follows that
	\[
	e^{F_1}\bigl(\wt A^2+ Q_0 + M_V\bigr)e^{-F_1} = \wt A^2+ B_0 + R_{-1},
	\]
	where $R_{-1}$ is an operator of order $(-1)$.
	
	Using the same constructions, one can
	modify $F_1$ by an operator $G_1$ of order $(-2)$ so that, if
	$F_2 = F_1 + G_1$,
	\[
	e^{F_2}\bigl(\wt A^2+ Q_0 + M_V\bigr)e^{-F_2} = \wt A^2 + B_0 + B_{-1} + R_{-2}
	\]
	where $R_{-2}$ is now an operator of order $(-2)$ and both $B_0$ and $B_{-1}$ commute with $A$.
	This process can be continued indefinitely, and after a Borel summation we can find an
	operator $F_\infty$, with the same principal symbol as $F_1$, such that
	\[
	e^{F_\infty}\bigl(\wt A^2+ Q_0 + M_V\bigr)e^{-F_\infty} = \wt A^2 + B +S,
	\]
	where $[A, B ]= 0$ and $S$ is a smoothing operator, and where the principal symbol of $B$ is the same as that of $B_0$.
\end{proof}

\subsection{Constructing the pseudo mode}

We now proceed to construct a pseudo mode for $\Delta + M_V$, under the hypotheses
of  part (1) of Theorem \ref{Main}.
We will do this by first constructing a pseudo mode for $\wt A^2+B$, then showing that it is also a 
pseudo mode for $\tilde{A}^2+B+S$, and finally 
applying the averaging lemma. 

If $S^*X$ denotes the unit cosphere bundle of
$X$, one has a diagram
\begin{equation}\label{diagrama}
\begin{array}{ccc}
\iota:S^*X &\hookrightarrow & T^*X\setminus\{ 0\}\\
\pi\downarrow\  \ & &\\
\calO & &\\
\end{array}
\end{equation}
and $\pi^*\wt V$ is the symbol of $B$ restricted to $S^*M$.
Let us assume the hypotheses of part (1) of the theorem, taking 
without loss of generality $\sigma_B(m) =\mu = 0$.
Thus, if $m\in\gamma$
\newcommand{\ave}{\text{\tiny ave}}
\begin{equation}\label{condicion}
\PB{\Re \wt V}{\Im \wt V}(m) < 0.
\end{equation}
Let $\calR\subset T^*X\setminus\{0\}$ be the ray through $m$.   $\calR$ is a conic
isotropic submanifold of $T^*X$.  Associated to any such manifold are spaces of distributions
$J^k(X, \calR)$, the Hermite distributions of Boutet de Monvel and Guillemin \cite{BdMG}.

\begin{lemma}
	For every $\ell$ there exists $\psi \in J^\ell(X, \calR)$  (with non-zero principal symbol)
	such that $B(\psi)\in C^\infty(X)$.
\end{lemma}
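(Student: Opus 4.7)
The plan is to construct $\psi$ as a Gaussian beam (Hermite distribution) along $\calR$ using the symbol calculus of Boutet de Monvel--Guillemin. In homogeneous symplectic coordinates $(x_1, x', \xi_1, \xi')$ near $m$ adapted to $\calR$, so that $\calR = \{x = 0,\ \xi' = 0,\ \xi_1 > 0\}$, a candidate $\psi \in J^\ell(X, \calR)$ is represented by an oscillatory integral
\[
\psi(x) = \int_0^\infty e^{i x_1 \xi_1}\, a\bigl(\sqrt{\xi_1}\, x',\, \xi_1\bigr)\, d\xi_1,
\]
with $a(y, \xi_1) \sim \sum_{j \geq 0} \xi_1^{\ell - j/2}\, a_{-j/2}(y)$ taking values in $\calS(\bbR^{n-1})$; the principal symbol of $\psi$ is the Schwartz function $a_0$ on the transverse fiber over $\calR$.

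The crucial structural fact is that $\sigma_B = \pi^*(\wt V + q_0)$ vanishes identically on $\calR$, both by our normalization $\sigma_B(m) = \mu = 0$ and because $\sigma_B$ is constant along each geodesic orbit. Taylor-expanding $\sigma_B$ in the scaled transverse variables $y = \sqrt{\xi_1}\, x'$, $\eta = \xi'/\sqrt{\xi_1}$ produces, at leading order in $\xi_1^{-1/2}$, a scalar first-order operator $L = \ell^w(y, D_y)$ on $\calS(\bbR^{n-1})$ whose symbol $\ell$ is the transverse differential of $\sigma_B$ at $m$, viewed as a complex linear functional on $T_m\calR^\omega / T_m\calR$. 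Under the symplectic reduction $S^*X \to \calO$, the hypothesis (\ref{condicion}) translates into $\PB{\Re \ell}{\Im \ell} < 0$, and a linear symplectic change of variables puts $\ell$ in the Fock normal form $\alpha(y_1 + i \eta_1)$; the Weyl quantization $L$ is then proportional to the annihilation operator $y_1 + \partial_{y_1}$, which has a Gaussian in its kernel and is surjective on $\calS(\bbR^{n-1})$. Choosing $a_0$ to be such a Gaussian kills the leading transverse symbol of $B\psi$; at every subsequent order one solves an inhomogeneous transport equation $L\, a_{-j/2} = r_{-j/2}$, solvable by surjectivity and determining $a_{-j/2}$ modulo $\ker L$. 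Borel-summing the resulting formal series then yields $\psi \in J^\ell(X, \calR)$ with non-zero principal symbol and $B\psi \in C^\infty(X)$.

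The principal obstacle is the identification of $L$ with an annihilation operator: one must verify that the sign in (\ref{condicion}) is precisely the one that yields the annihilation rather than the creation normal form, so that $\ker L \subset \calS(\bbR^{n-1})$ and the iteration can be carried out in the Schwartz category. With the opposite sign of the Poisson bracket the ground state of $L$ would be an anti-Gaussian, not in $\calS(\bbR^{n-1})$, and the whole construction would fail; this is exactly the microlocal mechanism leveraged in part (2) of Theorem \ref{Main}.
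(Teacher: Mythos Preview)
Your proposal is correct and follows essentially the same route as the paper's proof: both exploit the Boutet de Monvel--Guillemin symbol calculus for Hermite distributions along the isotropic ray $\calR$, use the vanishing of $\sigma_B$ on $\calR$ to drop half an order, identify the resulting transport operator on $\calS(\Sigma_m)$ with (a constant multiple of) an annihilation operator via the sign condition~(\ref{condicion}), and then iterate using surjectivity and Borel-sum. The paper phrases this abstractly and quotes \cite[Lemma~3.1]{BU03} for the key ``annihilation normal form'' step, whereas you spell out an explicit oscillatory-integral model and the Fock normal form; these are the same argument at different levels of concreteness.

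One small remark: your stated reason that $\sigma_B$ vanishes identically on $\calR$ (``because $\sigma_B$ is constant along each geodesic orbit'') is not quite the right one, since $\calR$ is the \emph{radial} ray through $m$, not a geodesic orbit. The correct justification is that $\sigma_B$ is homogeneous of degree zero (as $B$ is of order zero) together with $\sigma_B(m)=0$. This does not affect the validity of the construction.
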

\begin{proof}
	The construction is symbolic.  The symbol of an element $\psi \in J^\ell(X, \calR)$
	at $m\in\calR$ is an object in the space
	\[
	\sigma_m\in \bigwedge^{1/2}\calR_m\otimes\calS(\Sigma_m),
	\]
	where:
	\begin{enumerate}
		\item $\calR_m = T_m\calR$ and $\wedge^{1/2}$ stands for half forms on it,
		\item $\Sigma_m = \calR_m^\circ/\calR_m$ is the symplectic normal space to $\calR_m$
		(here $\calR_m^\circ$ is the symplectic orthogonal to $\calR_m$ inside $T_mS^*X$).
		\item $\calS(\Sigma_m)$ is the space of smooth vectors for the representation of the
		Heisenberg group of $\Sigma_m$ in a quantization of $\Sigma_m$.
	\end{enumerate}
	Given that $\calR$ is one-dimensional and the symbol is homogeneous, the
	half form portion of the symbol is trivial:  It is $dr^{\ell/2}$.
	The interesting part of the symbol is the vector in $\calS(\Sigma_m)$.
	
	We first note that $\Sigma_m$ is naturally isomorphic with $T_\gamma\calO$.  The
	isomorphism arises from the fact that $\omega(\partial_r,\Xi) = 1$, where
	$\Xi$ is the Hamilton field of $\norm{\xi}$ and $\partial_r$ is the radial field on $T^*X$.
	Therefore the intersection $\calR_m^\circ\cap T_m(S^*X)$ is both naturally isomorphic
	to $\Sigma_m$ and to $T_\gamma\calO$.
	
	The argument is now identical to that in \cite{BU03}, \S 3.
	To find the symbol  of $\psi$, we solve the equation $B(\psi) =0$ mod $C^\infty$
	using the symbol calculus
	of Hermite distributions.  For any $\psi\in J^\ell(X,\calR)$ the symbol of $B(\psi)$ as an element
	in $J^\ell(X, \calR)$ is zero, because $\sigma_B(m) = 0$.  
	Therefore $B(\psi)\in J^{\ell-1/2}(X,\calR)$.  To find the symbol of $B(\psi)$ as an element
	of this space
	we need to use the first transport equation for Hermite distributions
	(Theorem 10.2 in \cite{BdMG}).
	   The symbol of
	$B(\psi)$ is $\calL(\sigma_m)$, where $\calL: \calS(\Sigma_m)\to \calS(\Sigma_m)$
	is the operator in the infinitesimal representation of the Heisenberg group of $\Sigma_m$
	associated to the Hamilton field of $\sigma_B$ at $m$ (which is tangent to $\calR$).  
	By Lemma 3.1 in \cite{BU03}, the 
	condition (\ref{condicion}) on the Poisson bracket implies that $\calL$ is surjective with
	non-trivial kernel.  The symbol of $\psi$ is chosen in the kernel of $\calL$.  If 
	$\psi^{(1)}\in J^\ell(X,\calR)$ has such a symbol, then $B(\psi^{(1)})\in J^{\ell-1}(X,\calR)$.  
	We then modify $\psi^{(1)}$ to $\psi^{(2)} = \psi^{(1)} + \chi^{(1)}$ where 
	$\chi^{(1)}\in J^{\ell-1/2}(X,\calR)$
	is chosen so that $B(\psi^{(2)})\in J^{\ell-3/2}(X,\calR)$.  This will be the case if
	the symbol, $\tau$, of $\chi^{(1)}$ is such that
	$\calL(\tau)$ cancels the symbol of $B(\psi^{(1)})$.  Such a symbol $\tau$ exists because
	the operator $\calL$ is surjective.  Continuing in this fashion indefinitely (and doing a
	Borel-type summation) gives the result.
\end{proof}

Let $\psi$ be as in the lemma and $\psi = \sum_{k=1}^\infty\psi_k$ be its decomposition into sums of eigenfunctions corresponding to the eigenvalue clusters.  Since
$[\Delta, B] = 0$, it follows that
\[
B(\psi) = \sum_{k=1}^\infty B(\psi_k)
\]
is the cluster decomposition of $B(\psi)$, and therefore
\[
\norm{B(\psi_k)} = O(k^{-\infty})
\]
since $B(\psi)$ is smooth.
Since $\psi\in J^\ell(X, \calR)$, proceeding similarly as in \cite{BU03}, one can prove that
the sequence of norms $\{\psi_k\}$ has an asymptotic expansion as $k\to\infty$ in decreasing
powers of $k$ with non-trivial leading term.
%
It follows that $\{\psi_k\}$ is a pseudo mode for $\Delta +B$ with pseudo-eigenvalues
$\lambda_k = \Lambda_k$. 

At this point, we should have a pseudo mode for $\wt A^2+B$ with $\psi_k$ an eigenfunction of $\wt A^2$ for each $k$, with eigenvalue precisely $\Lambda_k$.

\medskip

Of course, the averaging lemma related $\Delta+M_V$ to $\wt A^2+B+S$, not to $\wt A^2+B$. However:
\begin{lemma}$\frac{||S\psi_k||}{||\psi_k||}=\mathcal O(k^{-\infty})$.
\end{lemma}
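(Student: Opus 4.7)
My plan is to exploit the fact that each $\psi_k$ is an exact eigenfunction of the order-one operator $A$ with eigenvalue $k$, so that $A^N\psi_k = k^N\psi_k$ for every $N\in\bbN$.  This lets me trade powers of $k$ against powers of $A$ and then absorb those powers of $A$ into the smoothing operator $S$.

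Concretely, for any $N\geq 0$ I would write $\psi_k = k^{-N} A^N\psi_k$ (valid once $k\geq 1$), so that
\[
S\psi_k = k^{-N}(SA^N)\psi_k,\qquad \frac{\norm{S\psi_k}}{\norm{\psi_k}}\leq k^{-N}\,\norm{SA^N}_{\calB(L^2(X))}.
\]
Since $N$ is arbitrary, this will give the claimed $O(k^{-\infty})$ decay, provided I can show that $SA^N\in\calB(L^2(X))$ for each $N$.

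To verify the boundedness of $SA^N$, I would appeal to the fact that $S$ is a smoothing operator, so its Schwartz kernel $K_S$ lies in $C^\infty(X\times X)$, while $A$ is a self-adjoint pseudo-differential operator of order one.  A standard pairing argument then shows that the Schwartz kernel of $SA^N$ is $A_y^N K_S(x,y)$, with $A_y$ acting on the second variable via self-adjointness, and this kernel is again smooth on $X\times X$.  Therefore $SA^N$ is itself smoothing, hence bounded (indeed compact) on $L^2(X)$, with an operator norm depending on $N$ but independent of $k$.

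The only point that requires a bit of care is the kernel calculation for the composition $SA^N$, in which the right-hand factor is not bounded on $L^2$.  This is routine, however, because both $S$ and $A$ act continuously on $C^\infty(X)$ and extend continuously to $\calD'(X)$, so the integration by parts against the smooth kernel $K_S$ is justified.  With these two ingredients the lemma is immediate, and incidentally no use is made of commutation between $A$ and $S$, which the averaging construction does not guarantee exactly.
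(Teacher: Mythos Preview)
Your proof is correct and follows essentially the same route as the paper: the paper writes $S\wt A^N\psi_k=\Lambda_k^{N/2}S\psi_k$ and uses that $S\wt A^N$ is smoothing (hence $L^2$-bounded) to conclude $\norm{S\psi_k}\leq C_N\Lambda_k^{-N/2}\norm{\psi_k}$, which is precisely your argument with $\wt A$ in place of $A$. Your kernel discussion for the boundedness of $SA^N$ is a slightly more explicit version of what the paper asserts in one line.
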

\begin{proof}  For any $N\in\mathbb N$, the operator $S\wt A^N$ is smoothing (since $S$ is)
and hence is bounded on $L^2$ by some constant $C_N$. Then $||S\wt A^N\psi_{k}||\leq C_N||\psi_{k}||$. On the other hand, $||S\wt A^N\psi_{k}||=||S\Lambda_k^{N/2}\psi_{k}||=\Lambda_k^{N/2}||S\psi_{k}||$. We conclude that $||S\psi_k||\leq C_N\Lambda_k^{-N/2}||\psi_k||$; however, $\Lambda_k$ is a quadratic polynomial in $k$ with positive leading coefficient. Since $N$ is arbitrary, this completes the proof.
\end{proof}

\begin{corollary} The sequence $\{\psi_k\}$ is also a pseudo mode for $\wt A^2+B+S$. \end{corollary}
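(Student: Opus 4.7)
The plan is to combine the two ingredients already at hand: the fact that $\{\psi_k\}$ is a pseudo mode for $\wt A^2+B$ with pseudo-eigenvalues $\Lambda_k$ (established at the end of the previous subsection), together with the estimate $\|S\psi_k\|/\|\psi_k\| = O(k^{-\infty})$ from the lemma just proved. Once those are in place, the corollary is a one-line triangle inequality, so there is no substantive new obstacle to overcome.

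Concretely, I would write
\[
\frac{\norm{(\wt A^2+B+S-\Lambda_k)\psi_k}}{\norm{\psi_k}} \leq \frac{\norm{(\wt A^2+B-\Lambda_k)\psi_k}}{\norm{\psi_k}} + \frac{\norm{S\psi_k}}{\norm{\psi_k}}.
\]
The first term on the right is $O(k^{-\infty})$ because $\{\psi_k\}$ was constructed to be a pseudo mode for $\wt A^2+B$ with pseudo-eigenvalues $\Lambda_k$; the second term is $O(k^{-\infty})$ by the preceding lemma. Hence the left-hand side is also $O(k^{-\infty})$. Combined with $|\Lambda_k|\to\infty$ (since $\Lambda_k$ is a quadratic polynomial in $k$ with positive leading coefficient), the definition of asymptotic pseudospectrum is satisfied, so $\{\psi_k\}$ is a pseudo mode for $\wt A^2+B+S$ with the same pseudo-eigenvalues $\Lambda_k$.

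The only issue to flag, rather than a genuine obstacle, is that the pseudo-eigenvalues for this intermediate operator are $\Lambda_k$ (not $\Lambda_k+\mu$); the shift by $\mu=(\wt V+q_0)(\gamma)$ is already encoded in the normalization $\sigma_B(m)=\mu=0$ chosen at the start of the pseudo mode construction. Once the corollary is established, the final step of part (1) of Theorem \ref{Main} will be to pull $\{\psi_k\}$ back through the averaging conjugation $e^{-F_\infty}$ to produce a pseudo mode for $\Delta+M_V$, but that transfer is outside what the corollary itself asserts.
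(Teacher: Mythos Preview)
Your argument is correct and matches the paper's approach exactly; the paper simply says the proof follows immediately from the previous lemma after normalizing $\psi_k$ in $L^2$, and your triangle-inequality computation is precisely the content of that remark. Your observation about the $\mu=0$ normalization and the subsequent conjugation by $e^{-F}$ is also accurate but, as you note, lies outside the corollary itself.
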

\begin{proof} We may as well assume $\psi_k$ are $L^2$-normalized; the proof then follows immediately from the previous lemma.
\end{proof}

\medskip
We now finish the proof of the first part of Theorem \ref{Main}.
Let us define $\varphi_k := e^{-F}\psi_k$ where $\{\psi_k\}$ is a pseudo mode as in the 
previous corollary such that, without loss of generality, $\norm{\psi_k}\equiv1$.  Then
\[
\norm{(\Delta + M_V - \Lambda_k I)\varphi_k} = \norm{e^{-F}(\wt A^2+B+S-\Lambda_kI)(\psi_k)};
\]
since $e^{-F}$ is a bounded operator and
and $\{\psi_k\}$ 
is a pseudomode for $\tilde A^2 + B + S$, this is $\mathcal O(k^{-\infty})$.
On the other hand, since $F$ is of order $(-1)$,
\[
(e^{-F})^* e^{-F} = I +T
\]
where $T$ is an operator of order $(-1)$.  Therefore
\[
\norm{\varphi_k}^2 = \norm{\psi_k}^2 + \inner{T\psi_k}{\psi_k}.
\]
Since the second term on the right-hand side tends to zero as $k\to\infty$,
$\{\varphi_k\}$ is a pseudo mode for $\Delta+M_V$ with pseudo-eigenvalues $\lambda_k = \Lambda_k$.  
(Recall that we assumed without loss of generality that $\mu=0$.)
This proves the first part of Theorem \ref{Main}.

\subsection{Proof of part 2}

Let us assume that $\{\Lambda_k + \mu\}$, where $\mu = (\wt{V} + q_0)(\gamma)$, is in the asymptotic pseudospectrum, 
By definition, there exists a pseudo-mode $\{\psi_k\in E_k\}$, a sequence that satisfies 
$\norm{\psi_k} = 1$ for all $k$ and
\[
\norm{(M_V^\ave + Q_0 - \mu)(\psi_k)} = O(k^{-\infty}).
\]
Let $\psi$ be the distribution $\psi = \sum_{k=1}^\infty \psi_k$.  This is clearly non-smooth, while
\begin{equation}\label{smooth}
(M_V^\ave + Q_0 - \mu I)(\psi) \in C^\infty(X)
\end{equation}
by the Sobolev embedding theorem.

Assume now that
\[
\PB{\Re\wt{V}+q_0}{\Im V} (\gamma) > 0.
\]
By Theorem 27.1.11 of \cite{Ho}, the operator $M_V^\ave + Q_0 - \mu I$ is microlocally subelliptic on the cone
over the geodesic $\gamma \subset S^*X$, with loss of $1/2$ derivatives.  Given (\ref{smooth}), it follows that
the wave-front set of $\psi$ must be disjoint from this cone.

\section{Proof of Theorem \ref{NumRangeThm}}

Recall our notation: $X$ is a Zoll manifold, and 
let $A$ be the operator appearing in (\ref{conclusionSchrodinger}).
$A$ is a first-order
pseudo-differential operator with symbol $\sigma_A(x,\xi)=\norm{\xi}_x$ and 
spectrum the eigenvalues $k=0,1,\ldots$.  Let 
$L^2(X) = \oplus_{k=0}^\infty E_k$ be the decomposition of
$L^2(X)$ into eigenspaces of $A$.

Referring to (\ref{conclusionSchrodinger}) note first that, for each $k$, 
\[
\calR_k = \Bigl\{ \frac{\inner{(Q_0+M_V)\psi}{\psi}}{\inner{\psi}{\psi}}
\;;\;\psi\in E_k\setminus\{0\};
\Bigr\}
\]
that is, $\calR_k$ is the numerical range of the operator $Q_0+M_V+R_{-1}$ restricted to the finite-dimensional subspace $E_k$. It is clear that $\calR_k$ is closed for each $k$ and, by the Toeplitz-Hausdorff theorem, $\calR_k$ is convex. An elementary argument shows that the limit $\calR_{\infty}$ is also closed and convex.

\subsection{Existence of modes with microsupport on geodesics}

The purpose of this section is to prove:  
\begin{proposition} Let $X$ be either a standard $S^n$ or a Zoll surface, and let
	$\gamma \subset S^*X$ be a geodesic.  Then there exist sequences $\{u_k\}$
	of functions such that:
	\begin{enumerate} 
		\item $\forall k\ u_k\in E_k$ and $\norm{u_k}=1$, 
		\item the semi-classical wave front set of the sequence is
		equal to $\gamma$, and
		\item For all pseudo-differential operators $Q$ of order zero on $X$
		one has
		\begin{equation}\label{matCoeffProperty}
				\inner{Q(u_k)}{u_k} = \frac 1{2\pi}\int_\gamma \sigma_Q\, ds + O(1/\sqrt{k})
		\end{equation}
		where $\sigma_Q$ is the principal symbol of $Q$.
	\end{enumerate}
\end{proposition}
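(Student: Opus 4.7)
The plan is to build $u_k$ explicitly on the round sphere using highest-weight spherical harmonics, and then transfer the construction to an arbitrary Zoll surface via a microlocal conjugation to $S^2$.

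For $X=S^n$, I would use the $SO(n+1)$-symmetry to rotate so that $\gamma$ is the equator in the $(x_1,x_2)$-plane, and set
\[
u_k=c_k(x_1+ix_2)^k\big|_{S^n},\qquad\norm{u_k}_{L^2}=1.
\]
Because $(x_1+ix_2)^k$ is a harmonic polynomial of degree $k$, its restriction is a $\Delta$-eigenfunction with eigenvalue $k(k+n-1)$; since on $S^n$ we have $\Delta=(A+\tfrac{n-1}{2})^2-\tfrac{(n-1)^2}{4}$, this forces $u_k\in E_k$. In tubular coordinates $(\theta,\phi)$ around $\gamma$, with $x_1+ix_2=\sqrt{1-|\phi|^2}\,e^{i\theta}$ and $\phi=(x_3,\dots,x_{n+1})$, we have $u_k\sim c_k e^{ik\theta}e^{-k|\phi|^2/2}$, i.e. a coherent state of transverse Gaussian width $k^{-1/2}$ with oscillation frequency $k$ along $\gamma$; the standard analysis of this WKB/Gaussian ansatz (with $\hbar=1/k$) yields $\wf_\hbar(u_k)=\gamma$, which is~(2).

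For~(3), I would write $Q$ as an oscillatory integral in the tube and substitute the ansatz into $\inner{Qu_k}{u_k}$. Stationary phase in the cotangent variable pins $\xi$ to $k\,d\theta$ at the equator, while Laplace's method against $e^{-k|\phi|^2}$ localizes at $\phi=0$; homogeneity of $\sigma_Q$ of degree zero then yields
\[
\inner{Qu_k}{u_k}=\frac1{2\pi}\int_0^{2\pi}\sigma_Q(\gamma(\theta),\dot\gamma(\theta))\,d\theta+O(k^{-1/2})=\frac1{2\pi}\int_\gamma\sigma_Q\,ds+O(k^{-1/2}),
\]
which is~(\ref{matCoeffProperty}). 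The $O(k^{-1/2})$ error tracks the first Taylor correction of $\sigma_Q$ transverse to $\gamma$ integrated against the Gaussian.

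For $X$ a Zoll surface, I would invoke the microlocal conjugation results of Weinstein and Colin de Verdi\`ere: near any oriented geodesic $\gamma$ there is an elliptic Fourier integral operator $U$ whose underlying canonical transformation carries a reference equator of $S^2$ to $\gamma$, and which intertwines the operator $A$ on $X$ with its $S^2$ counterpart modulo smoothing. Setting $u_k:=Uv_k/\norm{Uv_k}$ with $v_k$ the sphere construction above, the semiclassical wave-front set transforms under the canonical relation so $\wf_\hbar(u_k)=\gamma$; and Egorov's theorem applied to $U^*QU$ transfers the matrix-coefficient asymptotics from $S^2$ to $X$. The main obstacle is this last step: the existence of a microlocal conjugation $U$ with the right properties is standard for $S^n$ (the round structure supplies it directly) and holds for Zoll surfaces by a theorem specific to dimension two, but is not available in general for higher-dimensional Zoll manifolds, which is precisely why the proposition is restricted to these two settings.
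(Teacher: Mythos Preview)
Your approach is essentially identical to the paper's: highest-weight spherical harmonics $c_k(x_1+ix_2)^k$ on $S^n$ (the paper simply cites Guillemin's \emph{Some microlocal aspects of analysis on compact symmetric spaces} for properties (2)--(3) rather than doing the Gaussian/stationary-phase computation you outline), followed by transport to a Zoll surface via a global FIO $U:L^2(S^2)\to L^2(X)$ associated to a symplectomorphism intertwining the geodesic flows. The one point the paper handles more carefully is the requirement $u_k\in E_k$ \emph{exactly} in the Zoll case: it invokes the averaging lemma to replace $U\Delta U^{-1}=\Delta_0+R$ by a conjugate with $[\Delta_0,R]=0$, so that $U$ genuinely maps spherical harmonics of degree $k$ onto $E_k$; your ``intertwines $A$ modulo smoothing'' only gives $Uv_k\in E_k$ up to $O(k^{-\infty})$, which you would need to fix by projecting onto $E_k$ (harmless for the asymptotics, but worth saying).
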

\begin{proof}
	First suppose $X$ is a sphere.  Since the construction is SO$(n+1)$ equivariant, without
	loss of generality $\gamma$ corresponds to the intersection
	$\gamma_0$  of $S^n\subset\bbR^{n+1}$
	with the $x_1x_2$ plane, where $(x_1, \ldots, x_{n+1})$ are the standard coordinates in 
	$\bbR^{n+1}$.  Then one can take
	\begin{equation}\label{canTake}
	u_k(x) = a_k\, (x_1+ix_2)^k
	\end{equation}
	where $a_k\in\bbR$ is chosen so that the $L^2$ norm of $u_k$ is equal to one.  It is known
	that these functions have the required properties (see Proposition 7.6 in \cite{Gu79}).
	
	Now assume that $X$ is a Zoll surface, which we normalize so that its geodesics have length $2\pi$.  
	Then it is known that there exists an invertible
	Fourier integral operator $U: L^2(X)\to L^2(S^2)$ such that
	\begin{equation}\label{}
	U\,\Delta\, U^{-1} = \Delta_0 + R,
	\end{equation}
	where $\Delta_0$ is the standard Laplacian on $S^2$ and $R$ is a pseudo-differential operator
	of order zero.  (See \cite{Wei74}.)  
	The FIO $U$ is associated to a homogeneous canonical transformation 
	$T: T^*S^2\setminus\{0\}\to T^*X\setminus\{0\}$ 
	that intertwines the geodesic flows.  Applying once again the averaging
	method (more precisely Lemma 1 in \cite{Gu81}) to $\Delta_0+R$, 
	one can assume without loss of generality that 
	$ [\Delta_0, R]=0$.
	(The result cited says that any operator of the form $\Delta_0 +R$ with
	$R$ self-adjoint pseudo-differential of order zero can be conjugated, this time by a unitary 
	pseudo-differential operator, to an operator of the same form but now such that 
	$ [\Delta_0, R]=0$.)  For each $k$, the operator $U$ maps the space of spherical harmonics
	of degree $k$ onto $E_k$.
	
	Pre-composing $U$ with a rotation, we can also assume that $T$ maps
	the geodesic $\gamma_0$ to $\gamma$.
	We then define $u_k$ to be the result of applying the operator
	$U$ to the right-hand side of (\ref{canTake}).
	Since the desired properties are equivariant with respect to actions of unitary FIOs, 
	we are done.

\end{proof}

This existence result immediately implies: 
\begin{corollary}
The image of $\wt V+q_0$, and therefore its convex hull, 
are contained in $\calR_\infty$.  
\end{corollary}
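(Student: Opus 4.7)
The plan is simply to feed the sequences $\{u_k\}$ produced by the proposition into the definition of $\calR_k$ and pass to the limit. Fix a geodesic $\gamma\subset S^*X$ and let $\{u_k\}$ be a sequence as in the proposition, with $u_k\in E_k$ and $\norm{u_k}=1$. Because $u_k$ is an eigenfunction of $\wt A^2$ with eigenvalue $\Lambda_k$, we have
\[
\inner{(\Delta+M_V)u_k}{u_k}-\Lambda_k=\inner{(Q_0+M_V)u_k}{u_k},
\]
so this complex number lies in $\calR_k$.

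Now apply property (\ref{matCoeffProperty}) to the zeroth-order pseudo-differential operator $Q=Q_0+M_V$, whose principal symbol is $q_0+V$ (with $V$ viewed as a function on $T^*X\setminus\{0\}$ via pullback). Since in both cases covered by the proposition the common period is $T=2\pi$, the factor $\frac{1}{2\pi}$ in (\ref{matCoeffProperty}) coincides with the $\frac{1}{T}$ appearing in the definition of the Radon transform $\wt V$. Moreover $q_0$ is constant along $\gamma$ since $[Q_0,\Delta]=0$. Therefore
\[
\inner{(Q_0+M_V)u_k}{u_k}=\frac{1}{2\pi}\int_\gamma (q_0+V)\,ds+O(1/\sqrt{k})=(\wt V+q_0)(\gamma)+O(1/\sqrt{k}).
\]

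The left-hand side lies in $\calR_k$ for every $k$, and the right-hand side converges to $(\wt V+q_0)(\gamma)$ as $k\to\infty$. Because $\calR_\infty$ is the limit of the $\calR_k$ and is closed (as already noted at the beginning of Section 4), this limit point lies in $\calR_\infty$. Since $\gamma\in\calO$ was arbitrary, the entire image of $\wt V+q_0:\calO\to\bbC$ is contained in $\calR_\infty$. Finally, since $\calR_\infty$ is convex, it must contain the convex hull of its subset, giving the desired inclusion.

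There is no real obstacle left: the proposition has done all of the work, and the only point to verify carefully is the normalization of $T$ so that the integral appearing in (\ref{matCoeffProperty}) matches the Radon transform used to define $\wt V$.
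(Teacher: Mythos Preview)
Your argument is correct and is essentially the same as the paper's proof: both apply the proposition's sequence $\{u_k\}$ to compute $\inner{(Q_0+M_V)u_k}{u_k}\in\calR_k$ and use (\ref{matCoeffProperty}) to identify the limit as $(\wt V+q_0)(\gamma)$. Your version is somewhat more explicit about the normalization $T=2\pi$ and the role of closedness and convexity of $\calR_\infty$, but there is no substantive difference.
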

\begin{proof}
	Given $\gamma\in\calO$, let $\{u_k\}$ be as in the previous Proposition.   
	By (\ref{matCoeffProperty}), the limit of the matrix coefficients 
	\[
	\inner{(Q_0+M_V)u_k}{u_k}\in\calR_k
	\]
	as $k\to\infty$ is precisely $(q_0+\wt{V})(\gamma)$.
\end{proof}

\subsection{The converse} We now prove that $\calR_\infty$ is contained in the convex hull
of the image of $\wt{V} + q_0$.
(The proof is modeled on the proof of \cite[Prop. 5.2]{BU03}.)
To show this, we will show that for all lines in $\mathbb C$ such that one of the half
planes cut out by the line contains the range of $\wt V+q_0$, every element of $\calR_{\infty}$ is contained in the same half plane. This will show that $\calR_{\infty}$ is contained in, hence equal to, the convex hull in question. 

We will use the following Lemma (closely related to the sharp G\aa rding inequality):
\begin{lemma}
	Let $Q$ be a zeroth order, self-adjoint pseudo-differential operator on $S^n$. Assume that
	its principal symbol $q\in C^\infty(T^*S^n\setminus \{0\})$ is non-negative:  $q\geq 0$.  Let
	$\{ \psi_k\in E_k\}$
	be a sequence of spherical harmonics such that $\norm{\psi_k}=1$ for all $k$.  Assume furthermore that
	\[
	\lim_{k\to\infty} \inner{Q(\psi_k)}{\psi_k} = \ell\in\bbR.
	\]
	Then $\ell\geq 0$.
\end{lemma}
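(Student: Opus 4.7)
The plan is to invoke the sharp G\aa rding inequality directly, exploiting the fact that sequences $\{\psi_k\}$ with $\psi_k\in E_k$ become arbitrarily small in negative-order Sobolev norms as $k\to\infty$.

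Concretely, I would first recall the sharp G\aa rding inequality (e.g. Theorem 18.1.14 in H\"ormander Vol.~III): if $Q$ is a pseudo-differential operator of order zero whose principal symbol $q$ satisfies $\Re q\geq 0$, then there exists a constant $C>0$ such that
\[
\Re\inner{Qu}{u} \;\geq\; -C\,\norm{u}_{H^{-1/2}}^2
\]
for all $u\in L^2(S^n)$. Since $Q$ is self-adjoint, $\inner{Q\psi_k}{\psi_k}$ is real and equals its real part.

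Next I would bound $\norm{\psi_k}_{H^{-1/2}}$. Since $\psi_k\in E_k$ is an eigenfunction of the first-order operator $A$ (from (\ref{structureZoll2})) with eigenvalue $k$, or equivalently an eigenfunction of $\Delta$ with eigenvalue $\Lambda_k=O(k^2)$, one has
\[
\norm{\psi_k}_{H^{-1/2}}^2 \;=\; O(k^{-1})\norm{\psi_k}_{L^2}^2 \;=\; O(k^{-1}).
\]
Combining these yields $\inner{Q\psi_k}{\psi_k}\geq -C'/k$; letting $k\to\infty$ gives $\ell\geq 0$.

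There is no real obstacle: the proof is a two-line application of sharp G\aa rding once one observes that the $H^{-1/2}$ scaling of spherical harmonics kills the $O(1)$ error term in that inequality. The only point worth double-checking is that the sharp G\aa rding loss at order zero is indeed $1/2$ derivatives (it is), which is what makes the argument succeed for eigenfunctions of the first-order operator $A$.
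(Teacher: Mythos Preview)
Your proposal is correct and matches the paper's own argument almost exactly: the paper also invokes the sharp G\aa rding inequality (stated in the equivalent form $\inner{Qu}{u}\geq -\inner{Bu}{u}$ for some order $(-1)$ operator $B$), then uses that $A\psi_k=k\psi_k$ with $A$ first-order to get $|\inner{B\psi_k}{\psi_k}|=k^{-1}|\inner{BA\psi_k}{\psi_k}|\leq C/k$, and passes to the limit. Your phrasing via the $H^{-1/2}$ norm is just an equivalent packaging of the same estimate.
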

\begin{proof}
	By the sharp G\aa rding inequality (see \cite{GSj}, Exercise 4.9), there exists a pseudo-differential operator $B$
	of order $-1$ such that $\forall u\in L^2(S^n)$
	\[
	\inner{Q(u)}{u} \geq -\inner{B(u)}{u}.
	\]
	As before, let $A$ be the operator which is equal to multiplication by $k$ when restricted to $E_k$.  Recall that
	$A$ is first-order pseudo-differential, so that $BA$ is of order zero and therefore bounded in $L^2$.  Therefore
	$\exists C>0$ such that, with $\psi_k$ as in the hypotheses of the Lemma, $\forall k$ 
	\[
	k\left|\inner{B(\psi_k)}{\psi_k} \right| = \left| \inner{BA(\psi_k)}{\psi_k}\right| \leq C.
	\]
	Therefore $\forall k$
	\[
	\inner{Q(\psi_k)}{\psi_k} \geq -\frac{C}{k}.
	\]
	Taking limits as $k\to\infty$ yields the desired result.
	\end{proof}

To finish the proof of the converse, consider a line 
\[
\left\{u+iv\in\bbC\;;\; au+bv=c,\ a,b,c\in\bbR\right\}
\] in $\mathbb C$, and assume that the range of $\wt V+q_0$ is contained in the region 
$au+bv\geq c$. Let $u_0+iv_0\in\calR_{\infty}$.  We need to show that $au_0+bv_0\geq c$. By definition of $\calR_\infty$, 
there exists a sequence 
$\left\{\psi_k\in E_k\right\}$ such that $\forall k\ \norm{\psi_k}=1$ and such that the sequence of complex numbers
\[
u_k+iv_k:=\inner{(Q_0+M_V^\ave)\psi_k}{\psi_k} 
\]
converges to $u_0+iv_0$ as $k$ goes to infinity. Consider now the operator
\begin{equation}\label{}
Q = a\left(Q_0 + M_{\Re V}^\ave\right) + b M_{\Im V}^\ave - cI.
\end{equation}
This is a self-adjoint pseudo-differential operator of order zero with non-negative symbol, and
\begin{equation}\label{}
\inner{Q(\psi_k)}{\psi_k} = au_k + bv_k - c\  \xrightarrow[k\to\infty]{}\  au_0 + b v_0 -c.
\end{equation}
By the Lemma this limit is non-negative, and the proof is complete.

\section{Examples on $S^2$}

\subsection{Preliminaries on the averaging operator}
We present here preliminary results on the operator $V\mapsto\wt V$ on the two sphere
$S^2\subset\bbR^3$
(see also the appendix in \cite{Gu76}).
The space, $\calO$, of oriented geodesics on $S^2$
can be identified with a copy of $S^2$.  Indeed an oriented great circle on $S^2$ is the intersection
of $S^2$ with an oriented plane through the origin:  
\[
\gamma = S^2\cap \pi_\gamma.
\]
The identification $\calO\cong S^2$ is via the map $\gamma\mapsto $ the unit normal
vector to $\pi_\gamma$ defining the orientation.   
Clearly this identification is equivariant with respect to the action of 
SO$(3)$.  We can therefore regard the averaging operator as an operator on $C^\infty(S^2)$:
\begin{equation}\label{aveMap}
\begin{array}{ccc}
S^2 & \xrightarrow{\small{ave}} & S^2\\
V & \mapsto & \wt V
\end{array},
\end{equation}
where $\quad \wt V(\gamma) = \frac{1}{2\pi}\int_\gamma V\, ds$.  This operator is obviously
SO$(3)$ equivariant, and therefore it maps each space of spherical harmonics, $E_k$, into itself.
Furthermore, by Schur's lemma, it is a constant $c_k$ times the identity on $E_k$.   

To proceed further, 
let $(x,y,z)$ denote the ambient ($\bbR^3$)
coordinates and let $\zeta = x+iy$.  Note that, for each $k$, 
\begin{equation}\label{}
\zeta^k\in E_k
\end{equation}
(we will abuse the notation and denote the restrictions of $x,y,z$ to the sphere by the same letters).
Indeed $(x+iy)^k$ is a homogeneous polynomial of degree $k$ and it is harmonic, by the Cauchy-Riemann 
equations.   Following the identification $\calO\cong S^2$ one finds that
\[
c_k = \wt{\zeta^k}(1,0,0) = \frac{1}{2\pi}\int_0^{2\pi}(i\cos(t))^{k}\,dt =
\begin{cases}
0 & \text{if } k\ \text{is odd}\\
(-1)^l\,\frac{(2l-1)(2l-3)\cdots 3\cdot 1}{(2l)(2l-2)\cdots 4\cdot 2}& \text{if } k=2l 
\end{cases}
\]
Using Wallis' formula for $\pi$ it follows that 
$c_{2l} \sim (-1)^l (l\pi)^{-1/2}$ .

\subsection{Analytic functions}  Our first example is actually a class of examples, namely
potentials of the form
\[
V(x,y,z) = f(\zeta),\quad f(\zeta) = \sum_{l=1}^\infty a_l\zeta^{2l}
\]
where the series is assumed to have a radius of convergence $>1$.
In \S 5  of \cite{GuU83} it was shown that the eigenvalues of $\Delta +M_V$ are exactly the
same as those of $\Delta$ (note that there is no constant term in the series), 
the intuition being that the average of the operator $M_V$, restricted
to ${E_k}$, is nilpotent for each $k$.

To see what Theorem \ref{Main} says about the asymptotic pseudospectrum in this example, 
we need to investigate the Poisson bracket condition on $\wt V$.
Note that, by the previous discussion,
\[
\wt V = \wt{f}(\zeta) = \sum_{l=1}^\infty a_l\,c_{2l}\zeta^{2l}
\]
which shows that $\wt V$ is of the same form as $V$.

\begin{lemma}
	Let $\wt V = F+iG$ with $F$ and $G$ real-valued.  Then
	\begin{equation}\label{}
	\PB{F}{G} = z\left(F_x^2 + F_y^2\right).
	\end{equation}
\end{lemma}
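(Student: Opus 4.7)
The plan is to reduce the computation to the standard Lie--Poisson bracket on $\mathfrak{so}(3)^*\cong \bbR^3$ restricted to the unit sphere. Under the identification (\ref{aveMap}) of $\calO$ with $S^2$ via unit normals to oriented planes, the symplectic form on $\calO$ obtained from reduction of $T^*S^2\setminus\{0\}$ by the geodesic flow agrees (up to the normalization already used in the definition of $\wt V$) with the standard area form on $S^2\subset\bbR^3$, which is the symplectic leaf at radius one of the Lie--Poisson structure on $\bbR^3$. That structure is specified by $\PB{f}{g}(\vec r)=\vec r \cdot (\grad f \times \grad g)$ for any smooth extensions of $f,g$ to a neighborhood of $S^2$ in $\bbR^3$; equivalently, it is determined by $\PB{x}{y}=z$, $\PB{y}{z}=x$, $\PB{z}{x}=y$ on the generators.

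Once this is in hand, the calculation is short. Because $\wt V=\wt f(\zeta)$ depends only on $\zeta=x+iy$, its real and imaginary parts $F,G$ have natural extensions to $\bbR^3$ which are independent of $z$, so $F_z\equiv G_z\equiv 0$. The cross product then collapses to
\[
\grad F\times\grad G=(F_x G_y - F_y G_x)\,\vec k,
\]
and pairing with $\vec r=(x,y,z)$ gives $\PB{F}{G}=z(F_x G_y-F_y G_x)$. Finally, holomorphy of $\wt f$ in $\zeta$ yields the Cauchy--Riemann equations $F_x=G_y$ and $F_y=-G_x$, whence
\[
F_x G_y-F_y G_x = F_x^2 + F_y^2,
\]
completing the identity.

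The only genuinely non-routine step is the first one: pinning down the precise normalization of the Poisson bracket on $\calO$ under the stated identification with $S^2$. Any scaling there would enter the final formula as a constant factor, so the conventions must be matched carefully, either by tracking them through the symplectic reduction of $T^*S^2\setminus\{0\}$ under the $S^1$-action generated by $\norm{\xi}$, or by checking the bracket on the three coordinate functions $x,y,z$ (whose averages over oriented great circles are computable by the same Wallis-type integrals used earlier in this section). Once the convention is fixed, the vector-calculus identity and Cauchy--Riemann together force the result.
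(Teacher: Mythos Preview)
Your argument is correct and lands on the same two ingredients as the paper: the Cauchy--Riemann equations for $\wt f$ and the identity $\PB{x}{y}=z$ on $\calO\cong S^2$. The route, however, is packaged differently. The paper works in toric (action--angle) coordinates $(\theta,z)$ with $\omega=dz\wedge d\theta$, expands $\PB{F}{G}=F_zG_\theta-F_\theta G_z$ by the chain rule through $x=x(\theta,z)$, $y=y(\theta,z)$, applies Cauchy--Riemann, and recognizes the remaining Jacobian factor $x_zy_\theta-x_\theta y_z$ as $\PB{x}{y}=z$. You instead invoke the Lie--Poisson formula $\PB{f}{g}=\vec r\cdot(\grad f\times\grad g)$ on $\mathfrak{so}(3)^*$ restricted to the unit leaf, which yields $\PB{F}{G}=z(F_xG_y-F_yG_x)$ in one line once you note $F_z=G_z=0$; Cauchy--Riemann then finishes. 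Your approach is a bit slicker computationally and makes the geometric origin of the bracket more transparent; the paper's toric computation is more self-contained and sidesteps your normalization caveat, since $dz\wedge d\theta$ is manifestly the area form and $\PB{x}{y}=z$ falls out directly. Either way the content is the same.
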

\begin{proof}
	Denote by $(\theta,z)$ toric coordinates on  $\calO\cong S^2$, so that the symplectic form
	is $dz\wedge d\theta$.  Then
	\[
	\PB{F}{G} = F_zG_\theta -F_\theta G_z = 
	\]
	\[
	=( x_zF_x+y_zF_y)(x_\theta G_x + y_\theta G_y) - 
	( x_\theta F_x+y_\theta F_y)(x_z G_x + y_z G_y).
	\]
	Using the Cauchy-Riemann equations and simplifying one obtains
	\[
	\PB{F}{G} =( x_zF_x+y_zF_y)(-x_\theta F_y + y_\theta F_x) - 
	( x_\theta F_x+y_\theta F_y)(-x_z F_y + y_z F_x)
	\]
	\[
	=(x_z y_\theta - x_\theta y_z)\,(F_x^2 +F_y^2).
	\]
	But $\PB{x}{y}=z$.
\end{proof}

As the pre-image under $\wt V$ of any complex number contains points
whose $z$ coordinates differ by a sign, we get:
\begin{corollary}
	For each
	$\displaystyle{
		\mu\in\{\wt{f}(\zeta)\;;\; |\zeta|<1\ \text{and}\ \wt{f}'(\zeta)\not=0 \}}$
	the sequence $\{\Lambda_k + \mu\}$ is
	in the asymptotic pseudospectrum of $\Delta +M_V$.
\end{corollary}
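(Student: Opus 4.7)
The plan is to apply Theorem~\ref{Main}(1) directly by producing a suitable oriented geodesic. Given $\mu = \wt{f}(\zeta_0)$ with $|\zeta_0|<1$ and $\wt{f}'(\zeta_0)\neq 0$, the goal is to exhibit $\gamma\in\calO$ with $(\wt V+q_0)(\gamma)=\mu$ (up to the absorbable constant contributed by $q_0$) and $\PB{\Re\wt V+q_0}{\Im\wt V}(\gamma)<0$. Since $X=S^2$, the function $q_0=-1/4$ is constant, so it drops out of the Poisson bracket, and the first condition reduces to arranging $\wt V(\gamma)=\wt f(\zeta_0)$.

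I would use the identification $\calO\cong S^2$ of the preceding subsection to parametrize oriented geodesics by points $(x,y,z)\in S^2$, writing $\zeta=x+iy$. Because $|\zeta_0|<1$, the fiber of the projection $\gamma\mapsto\zeta_\gamma$ over $\zeta_0$ consists of exactly two points $\gamma_\pm$ with $z$-coordinates $\pm\sqrt{1-|\zeta_0|^2}$, and since $\wt V$ is a function of $\zeta$ alone, both satisfy $\wt V(\gamma_\pm)=\wt f(\zeta_0)=\mu$. Writing $\wt V=F+iG$ and applying the preceding Lemma,
\[
\PB{\Re\wt V}{\Im\wt V}(\gamma_\pm)=z_{\gamma_\pm}\bigl(F_x^2+F_y^2\bigr)(\gamma_\pm).
\]

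The next step is to observe that because $\wt f(\zeta)=\sum a_l c_{2l}\zeta^{2l}$ is holomorphic in $\zeta=x+iy$, the Cauchy--Riemann equations give $F_x^2+F_y^2=|\wt{f}'(\zeta_0)|^2>0$ by hypothesis. Consequently the Poisson bracket is nonzero at both points $\gamma_\pm$ and, crucially, has \emph{opposite} signs there. Taking $\gamma=\gamma_-$ produces a geodesic satisfying the strict negativity required by Theorem~\ref{Main}(1), which immediately yields the conclusion. There is no real obstacle beyond the preceding Lemma: the key structural feature of the argument is that the two-fold ambiguity in the lift of $\zeta_0$ to $\calO$ produces Poisson brackets of opposite signs, which is precisely what makes the entire open disk $|\zeta|<1$ (intersected with $\{\wt f'\neq 0\}$) available as a source of pseudo-eigenvalues.
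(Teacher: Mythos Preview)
Your argument is correct and is essentially the same as the paper's: the paper simply notes, immediately before stating the corollary, that the pre-image under $\wt V$ of any value contains points whose $z$-coordinates differ by a sign, and then invokes Theorem~\ref{Main}(1) via the Lemma's formula $\PB{F}{G}=z(F_x^2+F_y^2)$. Your write-up spells out the same mechanism in more detail (in particular the observation that $F_x^2+F_y^2=|\wt f'(\zeta_0)|^2$ via Cauchy--Riemann, and the explicit choice of the lift $\gamma_-$), and you are right to flag the constant $q_0=-\tfrac14$, which the paper leaves implicit.
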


\subsection{Quadratic examples}

In this section we take $V$ of the form
$V= (ax+iy)^2$, where $a$ is a real constant not equal to either zero or one.
If $\wt\Delta$ denotes the (negative) Laplacian on $\bbR^3$, then
$\wt\Delta V = 2(a^2-1).$  It follows that 
\[
h= (ax+iy)^2 +\frac 13 (1-a^2)(x^2+y^2+z^2)
\]
is a harmonic, homogeneous polynomial on $\bbR^3$.  Therefore, the
decomposition of $V$ into spherical harmonics is
\[
V|_{S^2}  = h|_{S^2} -  \frac 13 (1-a^2).
\]
This allows us to compute $\wt V$, which is, up to constants, basically $V$ itself:
\[
\wt V = -\frac 12 h-  \frac 13 (1-a^2)\quad\Rightarrow\quad \wt V = -\frac 12 V
-\frac 12(1-a^2).
\]
Therefore
\[
\PB{\Re \wt V}{\Im \wt V} = \frac 14\PB{a^2x^2-y^2}{2axy} = 
\frac a2\left( \PB{a^2x^2}{xy}- \PB{y^2}{xy}\right) =
\]
\[
=\frac a2\left(  2a^2x^2z  - 2y^2(-z)\right) = az\left(a^2x^2 + y^2\right).
\]
\begin{corollary}
	For each
	$\displaystyle{
		\mu\in\{\wt{V}(\zeta)\;;\; 0<|\zeta|<1 \}}$
	the sequence $\{\Lambda_k + \mu\}$ 
	is in the asymptotic pseudospectrum of $\Delta + M_{(ax+iy)^2}$.
\end{corollary}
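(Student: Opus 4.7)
The plan is to deduce the statement directly from part~(1) of Theorem~\ref{Main}, using the two formulas just derived: $\wt V = -\frac12(ax+iy)^2 - \frac12(1-a^2)$ and $\PB{\Re\wt V}{\Im\wt V} = az(a^2x^2+y^2)$ on $\calO\cong S^2$. Since $q_0 = -\frac14$ is constant on the round sphere, it contributes only an overall shift and drops out of the Poisson bracket, so the only nontrivial condition to verify, for a given $\mu$ in the indicated set, is the existence of some $\gamma\in\calO$ with $\wt V(\gamma)=\mu$ at which $\PB{\Re\wt V}{\Im\wt V}(\gamma)<0$.

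Given $\zeta_0=x_0+iy_0$ with $0<|\zeta_0|<1$, I would observe first that the fiber of the projection $S^2\to\bbC$, $(x,y,z)\mapsto x+iy$, over $\zeta_0$ consists of exactly the two points $(x_0,y_0,\pm z_0)$, where $z_0=\sqrt{1-|\zeta_0|^2}>0$. Because $\wt V$ depends only on $\zeta$, both of these give the same value $\wt V(\gamma)=\mu$. The Poisson bracket, however, takes opposite signs at the two points, being $\pm a z_0(a^2 x_0^2+y_0^2)$. To check that this quantity is nonzero I would note: $a\neq 0$ by hypothesis, $z_0>0$ since $|\zeta_0|<1$, and $a^2x_0^2+y_0^2>0$ because $a\neq 0$ together with $(x_0,y_0)\neq(0,0)$, the latter coming from $|\zeta_0|>0$. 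One of the two sign choices for $z$ therefore yields $\PB{\Re\wt V}{\Im\wt V}(\gamma)<0$.

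With such a $\gamma$ in hand, Theorem~\ref{Main}(1) produces the required pseudo mode and shows that $\{\Lambda_k+\mu\}$ lies in the asymptotic pseudospectrum of $\Delta+M_{(ax+iy)^2}$. There is essentially no serious obstacle here: the analytic content has been absorbed into Theorem~\ref{Main}(1) and the preceding symbol computations, and the only remaining subtlety is the sign choice for $z_0$, which is dictated by the sign of $a$ and causes no difficulty.
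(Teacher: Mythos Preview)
Your argument is correct and matches the paper's intended reasoning exactly: the paper states this corollary without proof, relying on the same observation made just before the analogous corollary in \S5.2, namely that the fibre of $\wt V$ over any value in the indicated range contains points with both signs of $z$, so the computed bracket $az(a^2x^2+y^2)$ can be made negative. Your verification that this quantity is nonzero (using $a\neq 0$, $0<|\zeta_0|<1$) is the only detail to check, and you have it right.
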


%

\bibliographystyle{amsalpha}

\bibliographystyle{amsplain}

\end{document}